\let\oldmarginpar\marginpar
\renewcommand\marginpar[1]
\newcommand{\la}{\langle}
\newcommand{\ra}{\rangle}
\newtheorem{theorem}{\bf Theorem}[section]
\newtheorem{lemma}[theorem]{\bf Lemma}
\newtheorem{corollary}[theorem]{\bf Corollary}
\newcommand{\FF}{{\Bbb F}}
\newcommand{\NN}{{\Bbb N}}
\newcommand{\QQ}{{\Bbb Q}}
\newcommand{\RR}{{\Bbb R}}
\newcommand{\ZZ}{{\Bbb Z}}
\newcommand{\ggreat}{>\kern-.7ex>}
\newcommand{\ssmall}{<\kern-.7ex<}
\newcommand{\qu}{/\kern-.7ex/}
\newcommand{\exh}{\to\kern-1.8ex\to}
\newcommand{\cC}{{\EuScript{C}}}
\newcommand{\fF}{{\EuScript{F}}}
\newcommand{\gG}{{\EuScript{G}}}
\newcommand{\pP}{{\EuScript{P}}}
\newcommand{\GL}{\operatorname{GL}}
\newcommand{\Diff}{\operatorname{Diff}}
\newcommand{\Ham}{\operatorname{Ham}}
\newcommand{\Homeo}{\operatorname{Homeo}}
\newcommand{\Id}{\operatorname{Id}}
\newcommand{\Ker}{\operatorname{Ker}}
\newcommand{\rk}{\operatorname{rk}}
\newcommand{\Stab}{\operatorname{Stab}}
\newcommand{\discsym}{\operatorname{disc-sym}}
\title[Jordan property and almost fixed point property]
{Jordan property for homeomorphism groups and almost fixed point property}
\author{Ignasi Mundet i Riera}
\address{Facultat de Matem\`atiques i Inform\`atica\\
Universitat de Barcelona\\
Gran Via de les Corts Catalanes 585\\
08007 Barcelona \\
Spain}
\email{ignasi.mundet@ub.edu}
\date{\today}
\subjclass[2010]{57S17,54H15}
\thanks{This research was partially supported by the grant
PID2019-104047GB-I00 from the Spanish Ministeri de Ci\`encia i Innovaci\'o.}
\begin{document}

\maketitle

\begin{abstract}
We study properties of continuous finite group actions on topological manifolds that
hold true, for any finite group action, after possibly passing to a subgroup
of index bounded above by a constant depending only on the manifold. These include
the Jordan property, the almost fixed point property, as well as bounds
on the discrete symmetry group. Most of our results apply to manifolds satisfying
some restriction such as having nonzero Euler characteristic or having the
integral homology of a sphere. For an arbitrary
topological manifold $X$ such that $H_*(X;\ZZ)$ is finitely generated, we prove
the existence a constant $C$ with the property that for any continuous action of
a finite group $G$ on $X$ such that every $g\in G$ fixes at least on
point of $X$, there is a subgroup $H\leq G$ satisfying $[G:H]\leq C$ and
a point $x\in X$ which is fixed by all elements of $H$.
\end{abstract}

\section{Introduction}

\subsection{Main results}
\label{ss:main-results}
Our aim in this paper is to prove several results on continuous finite group actions
on topological manifolds which, despite not being necessarily true for all group actions, are valid
up to passing to subgroups of uniformly bounded index. This includes for example results on the Jordan
property of homeomorphisms group, as we will explain below, but we
will also consider a few other properties. Given the need to pass to subgroups of bounded index,
our results are especially meaningful when considering continuous actions of large finite groups.

To materialize the previous idea we introduce some terminology.
Let $\pP$ and $\pP'$ be two general properties of continuous finite group actions on topological manifolds.
Both $\pP$ and $\pP'$ may refer to the algebraic structure of the finite group  that acts
(it may be abelian, nilpotent, a $p$-group for an arbitrary prime $p$, etc.),
to the geometry of the action (being effective, free, with
or without fixed points, or, if the manifold has the necessary additional structure,
being smooth, symplectic, complex, etc.), or to both the group and the action.

Let $X$ be a topological manifold. We say that {\it almost every} finite group action on $X$ that enjoys
some property $\pP$ satisfies also property $\pP'$ if there exists a constant $C$, depending only on $X$,
such that the following is true:
\begin{quote}
for every action of a finite group $G$ on $X$ which satisfies
property $\pP$ there exists a subgroup $G'\leq G$ such that $[G:G']\leq C$ and such that the action of $G'$ on
$X$ (defined by restricting the action of $G$) satisfies property $\pP'$.
\end{quote}
If property $\pP'$ refers only to the group and not to the action, then we will say that almost every group
acting on $X$ with property $\pP$ satisfies also property $\pP'$.

Here is a simple example of this notion, which is \cite[Lemma 2.6]{M2} with cohomology replaced by homology
(the proof is identical).

\begin{lemma}
\label{lemma:trivial-homology}
Let $X$ be a topological manifold such that $H_*(X;\ZZ)$ is finitely generated.
Almost every continuous finite group action on $X$ induces the trivial action
on the integral homology $H_*(X;\ZZ)$.
\end{lemma}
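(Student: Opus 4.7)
\medskip
\noindent\textbf{Proof plan.} Since $H_*(X;\ZZ)$ is finitely generated, only finitely many degrees $i$ contribute to it, and for each such $i$ we may write $H_i(X;\ZZ)\cong\ZZ^{r_i}\oplus T_i$ with $r_i\in\NN$ and $T_i$ a finite abelian group; the numbers $r_i$ and the isomorphism types $T_i$ depend only on $X$. A continuous action of a finite group $G$ on $X$ induces a homomorphism
$$\rho\colon G\longrightarrow \prod_i \Aut(H_i(X;\ZZ)),$$
and I would take $G'=\Ker\rho$. Then $G'$ acts trivially on $H_*(X;\ZZ)$ and $[G:G']=|\operatorname{Image}\rho|$, so it is enough to exhibit a constant $C=C(X)$ bounding $|F|$ uniformly over every finite subgroup $F\leq \prod_i\Aut(H_i(X;\ZZ))$.

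Any such $F$ projects into each factor of the product, so its order is bounded by the product of the orders of its projections. Hence the task reduces to the following: given a finitely generated abelian group $A=\ZZ^r\oplus T$ with $T$ finite, produce a bound, depending only on $r$ and $T$, on the order of every finite subgroup of $\Aut(A)$. The torsion subgroup $T\subset A$ is characteristic, so $\Aut(A)$ preserves $T$ and acts on $A/T\cong\ZZ^r$, which gives an exact sequence
$$1\longrightarrow \Hom(\ZZ^r,T)\longrightarrow \Aut(A)\longrightarrow \Aut(T)\times\GL_r(\ZZ),$$
in which the kernel $\Hom(\ZZ^r,T)$ has order $|T|^r$ and $\Aut(T)$ is a finite group.

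The remaining factor $\GL_r(\ZZ)$ is infinite, but its finite subgroups are controlled by Minkowski's classical theorem: the reduction map $\GL_r(\ZZ)\to\GL_r(\FF_3)$ is injective on torsion, so every finite subgroup of $\GL_r(\ZZ)$ has order at most $|\GL_r(\FF_3)|$. Combining the three bounds yields
$$|F|\leq |T|^r\cdot|\Aut(T)|\cdot|\GL_r(\FF_3)|$$
for every finite $F\leq\Aut(A)$, and taking the product over the finitely many degrees with $H_i(X;\ZZ)\neq 0$ produces the required constant $C(X)$. The only substantive input is Minkowski's bound on finite subgroups of $\GL_r(\ZZ)$; the rest is routine book-keeping with the structure theorem for finitely generated abelian groups, so I do not anticipate any real obstacle.
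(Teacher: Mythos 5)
Your proof is correct and takes essentially the same route as the paper's, which simply cites \cite[Lemma 2.6]{M2}: the key point there is likewise that finite subgroups of the automorphism group of a finitely generated abelian group have uniformly bounded order, obtained from the structure theorem together with Minkowski's theorem that reduction mod $3$ is injective on torsion in $\GL_r(\ZZ)$. No gap to report.
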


We next state the main results in this paper.
From now one all finite group actions on topological manifolds will be implicitly assumed to be continuous.
Topological manifolds will {\it not} be implicitly assumed to be compact in this paper,
and they may have nonempty boundary. As usual, a closed manifold means a compact manifold without boundary.

\begin{theorem}
\label{thm:low-dimension}
Let $X$ be a connected and compact topological manifold
of dimension at most $3$.
Almost every finite group acting effectively on $X$ is abelian.
\end{theorem}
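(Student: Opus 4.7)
The plan is to establish, for each connected compact manifold $X$ with $\dim X\le 3$, a constant $C(X)$ such that every finite $G\le\Homeo(X)$ acting effectively contains an abelian subgroup of index at most $C(X)$; I split the argument according to the three possible dimensions.

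In dimension $1$, the only possibilities are $X\cong[0,1]$ and $X\cong S^1$. The only effective finite group action on $[0,1]$ is by $\ZZ/2$ swapping the endpoints. On $S^1$, a classical theorem of Ker\'ekj\'art\'o shows that every finite subgroup of $\Homeo(S^1)$ is topologically conjugate to a subgroup of $\O(2)$, hence is cyclic or dihedral, and therefore contains a cyclic subgroup of index at most $2$.

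In dimension $2$ I would organise the argument by Euler characteristic. When $\chi(X)<0$, a topological analogue of Hurwitz's theorem (proved via the structure of surface mapping class groups together with Nielsen realisation) yields an absolute bound $|G|\le C_0(X)$, so one can simply take $G'=\{1\}$; surfaces with boundary are handled by passing to the double, which lies in the same regime. When $\chi(X)=0$, so that $X$ is one of $T^2$, the Klein bottle, the annulus, or the M\"obius band, any finite group action is topologically conjugate to an isometric action for a flat metric, reducing the problem to finite subgroups of a compact extension of a flat torus, which visibly contain an abelian subgroup of bounded index. Finally, when $\chi(X)>0$, so that $X\in\{S^2,\RP^2,D^2\}$, the Ker\'ekj\'art\'o--Eilenberg theorem conjugates any finite subgroup of $\Homeo(X)$ into the standard orthogonal action, so one is reduced to the classical fact that finite subgroups of $\O(3)$ contain a cyclic subgroup of index at most $60$.

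The genuine difficulty lies in dimension $3$. Here the strategy is to invoke the Jordan property for $\Homeo(X)$ established, for closed $3$-manifolds, by Zimmermann on the basis of Perelman's geometrization: there is $C(X)$ such that every finite subgroup of $\Homeo(X)$ has a normal abelian subgroup of index at most $C(X)$, which is exactly the conclusion sought. For $X$ with nonempty boundary I would reduce to the closed case by a doubling argument: every finite action on $X$ extends by reflection to a finite action on $DX:=X\cup_{\partial X}X$ preserving $\partial X$ setwise, and the restriction map from $\{\phi\in\Homeo(DX):\phi(\partial X)=\partial X\}$ to $\Homeo(X)$ is injective on finite subgroups acting effectively. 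The main obstacle, beyond quoting the geometrization-based Jordan theorem, is ensuring that the reflection extension produces a genuinely continuous homeomorphism without any collar hypothesis on $\partial X$, and that the Jordan constant for $DX$ controls the one for $X$; this is where a careful topological argument, rather than any further geometric input, is required.
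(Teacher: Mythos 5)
Your dimension $1$ and $2$ arguments are essentially sound (and take a more hands-on route than the paper, which in all dimensions smooths the situation and quotes the Jordan property of $\Diff(X)$, citing \cite{M1} and \cite{Edmonds} for $\dim X\le 2$), and your doubling reduction for nonempty boundary is exactly the paper's. The worry you raise about the doubling is vacuous: a homeomorphism of $X$ preserves $\partial X$ by invariance of domain, so acting identically on both copies of $X$ induces a well-defined map on $X^{\sharp}=X\cup_{\partial X}X$ which is continuous by the universal property of the quotient topology --- no collar is needed --- and effectiveness is inherited because one copy of $X$ embeds in $X^{\sharp}$.

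The genuine gap is in dimension $3$. The result you invoke, Zimmermann's theorem \cite{Z2}, is a statement about finite subgroups of $\Diff(X)$, i.e.\ smooth actions: its proof rests on the geometrization of finite \emph{smooth} group actions (Thurston's orbifold theorem together with Perelman's work), which does not apply directly to arbitrary topological actions --- these can be wild, as in Bing's involution \cite{Bing}. So ``the Jordan property for $\Homeo(X)$ established by Zimmermann'' is a misattribution; if that statement were already available, the closed $3$-dimensional case of the present theorem would not be new. The missing bridge, and the actual key input in the paper's proof, is Pardon's smoothing theorem \cite{Pardon2019}: any finite group acting effectively by homeomorphisms on a $3$-manifold also admits an effective \emph{smooth} action on it, with respect to the smooth structure which exists and is unique by Moise's theorem \cite{Moise}. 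Granting this, a finite subgroup $G\le\Homeo(X)$ is abstractly isomorphic to a finite subgroup of $\Diff(X)$, and Zimmermann's Jordan bound for $\Diff(X)$ then yields an abelian subgroup of index bounded by a constant depending only on $X$, which is all the theorem asks (it concerns the isomorphism type of the acting groups, not conjugation of the action). Without Pardon's result, or some equivalent smoothing or local linearity statement for topological finite group actions on $3$-manifolds, your dimension-$3$ step does not go through.
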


For any real number $a$ we denote by $[a]$ the biggest integer smaller than or equal to $a$.
For any topological manifold $X$ we denote as usual by $H_*(X;\ZZ)=\bigoplus_{k\geq 0}H_k(X;\ZZ)$ its integral homology.
If $H_*(X;\ZZ)$ is finitely generated, the Euler
characteristic of $X$ is defined to be $\chi(X):=\sum_{k\geq 0}(-1)^k\dim_{\QQ}H_k(X;\ZZ)\otimes_{\ZZ}\QQ$.
If $X$ is a compact topological manifold then $H_*(X;\ZZ)$ is finitely generated, but this is not always
true if $X$ is not compact.

\begin{theorem}
\label{thm:nonzero-Euler-Jordan}
Let $X$ be a connected $n$-dimensional topological manifold with
$H_*(X;\ZZ)$ finitely generated and such that $\chi(X)\neq 0$.
Almost every finite group acting on $X$ is abelian and can be generated by $[n/2]$ or fewer elements.
\end{theorem}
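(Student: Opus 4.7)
The plan combines a homological reduction, a fixed-point extraction, and a local analysis at the fixed point.

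\emph{Step 1.} Apply Lemma~\ref{lemma:trivial-homology} to pass to a subgroup $G_1\leq G$ of index at most $C_1(X)$ acting trivially on $H_*(X;\ZZ)$. Because the Lefschetz number of every $g\in G_1$ is then $\chi(X)\neq 0$, each element of $G_1$ fixes some point of $X$.

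\emph{Step 2.} Invoke the almost fixed point property highlighted in the abstract: when every element of a finite group action on $X$ has a fixed point, one can pass to a subgroup of bounded index admitting a global fixed point. This produces $G_2\leq G_1$ of index at most $C_2(X)$ and a point $x\in X$ fixed by all of $G_2$. One could also try to prove this step directly here, via Smith theory (for $g\in G_1$ of order $p^k$, $\chi(X^g)\equiv\chi(X)\pmod p$) and an inductive argument on a composition series.

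\emph{Step 3.} Study the action of $G_2$ on a small invariant open neighborhood $U$ of $x$, which sits inside a coordinate ball homeomorphic to $\RR^n$ with $x$ corresponding to the origin. Two ingredients are needed: a local Jordan-type theorem saying that finite groups of self-homeomorphisms of $\RR^n$ fixing the origin have abelian subgroups of bounded index, and a rank bound saying that any finite abelian group acting effectively on $\RR^n$ fixing the origin can be generated by $[n/2]$ or fewer elements. The latter follows from Mann-Su type cohomological bounds on the $p$-rank of elementary abelian $p$-subgroups that act effectively on an $n$-manifold with a fixed point, combined with the classification of finitely generated abelian groups after suitable care at the prime $2$.

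The main obstacle is the rank bound in Step 3. In the smooth category, linearization near a fixed point reduces this bound to the rank of a maximal torus in $\O(n)$, namely $[n/2]$; topologically there is no slice theorem, so one must instead rely on Smith-theoretic $p$-rank estimates, which give the clean bound $[n/2]$ directly only for odd primes and require additional work to control the $2$-primary part and to assemble the local $p$-rank information into a bound on the total number of generators of the whole abelian group.
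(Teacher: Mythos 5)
Your plan has two genuine gaps, and it misses the structural step that makes the paper's proof work. First, Theorem \ref{thm:nonzero-Euler-Jordan} does not assume $X$ compact, only that $H_*(X;\ZZ)$ is finitely generated and $\chi(X)\neq 0$; so the Lefschetz argument in your Step 1 is unavailable, and its conclusion is in fact false in general: by \cite{HKMS} there are fixed-point-free periodic homeomorphisms of $\RR^7$, where $\chi=1$ and the homology action is automatically trivial. For the same reason no bounded-index subgroup of an arbitrary finite group acting on a noncompact $X$ need have a global fixed point, so Steps 1--2 cannot be repaired by a better fixed-point argument. Even in the compact case, your Step 2 quotes the almost fixed point property ``from the abstract,'' i.e.\ Theorem \ref{thm:from-weak-to-almost-fixed-point}, which is a main result of this paper whose proof already uses Theorem \ref{thm:generic-point}, Corollary \ref{cor:DH-local-manifold-with-boundary}, Theorem \ref{thm:Jordan} and Theorem \ref{thm:CPS-abstract}; it is not an independent ingredient. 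Your suggested Smith-theoretic substitute ($\chi(X^g)\equiv\chi(X)\bmod p$ plus induction) only produces fixed points for $p$-group actions --- which is exactly what the paper's Lemma \ref{lemma:p-group-nonzero-Euler-characteristic} does, with no compactness hypothesis --- not for a bounded-index subgroup of an arbitrary finite group.

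Second, the ``local Jordan-type theorem'' you posit in Step 3 (finite groups of homeomorphisms of $\RR^n$ fixing the origin have abelian subgroups of index bounded in terms of $n$) is not an available fact for topological actions of arbitrary finite groups, and it is essentially as hard as the theorem being proved: by Zimmermann \cite{Z3} there are finite groups acting effectively on $S^n$, hence on the cone $\RR^{n+1}$ fixing the vertex, that embed in no $\GL(n+1,\RR)$, so there is no linearization to fall back on. The paper's route is different and avoids both problems: reduce to $p$-groups via Corollary \ref{cor:CPS} (built on \cite{CPS2} and Mann--Su, Theorem \ref{thm:CPS-abstract}); for a $p$-group, get a fixed point by the mod-$p$ Euler characteristic count of Lemma \ref{lemma:p-group-nonzero-Euler-characteristic}; then linearize the $p$-group action at a fixed point by Dotzel--Hamrick \cite{DH} (Corollary \ref{cor:DH-local-manifold-with-boundary}, applied to $X$ itself, no invariant coordinate neighborhood is needed); finally apply Jordan's Theorem \ref{thm:Jordan} and the elementary fact that an abelian subgroup of $\GL(n,\RR)$ has a bounded-index subgroup inside a maximal torus of $\O(n)$, hence generated by $[n/2]$ elements. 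This reduction to $p$-groups is the missing backbone of your plan, and it also dissolves the difficulty you flag at the prime $2$ in the rank bound: once the group is linear the $2$-primary part is handled by conjugating into $\O(n)$, with no need for Mann--Su type estimates.
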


\begin{theorem}
\label{thm:homology-sphere}
Let $X$ be an $n$-dimensional topological manifold such that $H_*(X;\ZZ)\simeq H_*(S^n;\ZZ)$.
Almost every finite group acting effectively on $X$ is abelian and can be generated
by $[(n+1)/2]$ or fewer elements.
\end{theorem}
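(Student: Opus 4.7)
The plan is to combine Lemma \ref{lemma:trivial-homology} with Smith theory on $\ZZ/p$-homology spheres and a Jordan-type argument. First, by Lemma \ref{lemma:trivial-homology} there is a constant $C_1=C_1(X)$ and a subgroup $G_0\leq G$ with $[G:G_0]\leq C_1$ acting trivially on $H_*(X;\ZZ)$; it suffices to prove the conclusion for $G_0$.

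If $n$ is even, then $\chi(X)=1+(-1)^n=2\neq 0$, and Theorem \ref{thm:nonzero-Euler-Jordan} applied to $G_0$ produces an abelian subgroup of bounded index generated by $[n/2]=[(n+1)/2]$ elements; this closes the even case.

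Suppose now $n$ is odd, so $\chi(X)=0$ and Theorem \ref{thm:nonzero-Euler-Jordan} does not apply. The plan is to bound the $p$-rank of every elementary abelian $p$-subgroup $E\leq G_0$ using Smith theory on the $\ZZ/p$-homology $n$-sphere $X$. For $p$ odd, each nontrivial $e\in E$ has fixed set $X^e$ either empty or a $\ZZ/p$-homology sphere of dimension $\equiv n\pmod 2$ and at most $n-2$; analyzing how these fixed subspheres sit inside one another as $e$ ranges over $E$ (via a Borel-formula type argument for $p$-torus actions on $\ZZ/p$-homology spheres) yields $\rank E\leq [(n+1)/2]$. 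For $p=2$ the analogous Smith-theoretic bound is only $\rank E\leq n+1$. With these uniform $p$-rank bounds in hand, a Jordan-type theorem for finite groups of bounded rank gives an abelian subgroup $A\leq G_0$ of index bounded in terms of $n$; a further bounded-index passage to a subgroup of $A$ of index at most $2^{\lceil (n+1)/2\rceil}$ discards the excess $2$-torsion and leaves an abelian group generated by at most $[(n+1)/2]$ elements.

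The main obstacle I expect is the odd-$n$ case and, within it, the treatment of the prime $2$: Smith theory alone does not deliver the sharp bound $\rank E\leq [(n+1)/2]$ when $p=2$, so one must rely on the freedom to absorb the extra $2$-torsion into the bounded-index factor. Ensuring that every constant in sight depends only on the manifold $X$---so that the constant $C$ in the definition of ``almost every'' is uniform over all finite groups acting effectively on $X$---is the bookkeeping step that ties the whole argument together.
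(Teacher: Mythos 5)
Your reduction to the odd-$n$ case is sensible (the even case via $\chi(X)=2$ and Theorem \ref{thm:nonzero-Euler-Jordan} is fine, and agrees in spirit with the paper), and the Smith-theoretic rank bounds you quote for elementary abelian subgroups ($\rank E\leq[(n+1)/2]$ for odd $p$, $\rank E\leq n+1$ for $p=2$) are correct. The gap is the step where you pass from these rank bounds to an abelian subgroup of bounded index: there is no ``Jordan-type theorem for finite groups of bounded rank.'' Bounded rank of elementary abelian subgroups does not force an abelian subgroup of bounded index, neither for general finite groups (the simple groups $\PSL(2,q)$ have all elementary abelian subgroups of rank at most $2$, yet have no abelian subgroups of index bounded independently of $q$) nor even for $p$-groups (metacyclic $p$-groups $\ZZ/p^k\rtimes\ZZ/p^{k-1}$ with faithful action have rank $2$ but their abelian subgroups have index growing with $k$). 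This is exactly the hard point, and the paper does not get it from rank considerations: it uses the Dotzel--Hamrick linearization (Theorem \ref{thm:DH-sphere}, via the cone construction) to embed \emph{every} finite $p$-group acting effectively on the $\ZZ/p$-homology sphere $X$ into a real linear group of dimension $n+1$, then applies the classical Jordan theorem (Theorem \ref{thm:Jordan}) to those $p$-groups, and finally invokes the Csik\'os--Pyber--Szab\'o machinery (Theorem \ref{thm:CPS-abstract} / Corollary \ref{cor:CPS}) to pass from Sylow subgroups to arbitrary finite groups. Neither of these two inputs can be replaced by the rank bounds you have.

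A second, independent problem is your plan to ``absorb the excess $2$-torsion into the bounded-index factor.'' If an abelian $2$-group such as $(\ZZ/2^k)^r$ with $r>[(n+1)/2]$ could act effectively on $X$, the theorem would simply be false: any subgroup of $(\ZZ/2^k)^r$ generated by $[(n+1)/2]$ elements has index at least $2^{k(r-[(n+1)/2])}$, which is unbounded as $k\to\infty$, so no passage to a subgroup of index $2^{\lceil(n+1)/2\rceil}$ (or of any index depending only on $X$) can repair the generator count. Thus you must actually \emph{exclude} such actions, i.e.\ show that abelian $2$-groups acting effectively have, up to bounded index, at most $[(n+1)/2]$ generators; the Smith bound $\rank\leq n+1$ for elementary abelian $2$-groups does not do this. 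In the paper this again comes from the linear embedding: the elementary fact that almost every finite abelian $2$-subgroup of $\GL(n+1,\RR)$ is generated by $[(n+1)/2]$ or fewer elements (real representation theory of abelian groups), which is how the proof of Theorem \ref{thm:nonzero-Euler-Jordan} handles the generator bound and which Theorem \ref{thm:homology-sphere} inherits verbatim with $\GL(n+1,\RR)$ in place of $\GL(n,\RR)$.
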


As usual, when we say that the action of a group $G$ on $X$ has a fixed point we mean that there exists a point $x\in X$ fixed by all elements of $G$.

\begin{theorem}
\label{thm:compact-nonzero-Euler}
Let $X$ be a connected and compact topological manifold
with nonzero Euler characteristic.
Almost every finite group action on $X$ has a fixed point.
\end{theorem}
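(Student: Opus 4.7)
My plan is to combine the reduction to abelian, homologically trivial actions (from Theorem \ref{thm:nonzero-Euler-Jordan} and Lemma \ref{lemma:trivial-homology}) with the Lefschetz identity $\chi(Y^f)=L(f)$ for finite-order homeomorphisms of compact ENRs, iterated along a well-chosen sequence of prime-power-order elements.

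First I would apply Theorem \ref{thm:nonzero-Euler-Jordan} together with Lemma \ref{lemma:trivial-homology} to pass to a subgroup $G'\le G$ of index at most $C_0=C_0(X)$ that is abelian, is generated by at most $r:=[n/2]$ elements (where $n=\dim X$), and acts trivially on $H_*(X;\ZZ)$. Then by the classical identity $\chi(Y^f)=L(f)$ for every finite-order self-homeomorphism $f$ of a compact ENR $Y$, every $g\in G'$ satisfies $\chi(X^g)=L(g)=\chi(X)\ne 0$, so $X^g\ne\emptyset$.

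The core of the argument is iterative. I would build a strictly descending chain $X=X_0\supsetneq X_1\supsetneq\cdots\supsetneq X_k$ of nonempty compact ENRs together with a descending chain $G'=G_0'\supseteq G_1'\supseteq\cdots\supseteq G_k'$ of subgroups whose consecutive indices $[G_i':G_{i+1}']$ are bounded by a constant depending only on $X$, satisfying the invariants that $G_i'$ preserves $X_i$ setwise, acts trivially on $H_*(X_i;\ZZ)$, and $\chi(X_i)=\chi(X)$. If every element of $G_i'$ fixes $X_i$ pointwise, the iteration halts and any point of $X_i$ is fixed by all of $G_i'$. Otherwise, using that $G_i'$ is abelian and decomposes into prime-power components, I would pick an element $g_i\in G_i'$ of prime-power order acting nontrivially on $X_i$ and set $X_{i+1}:=X_i^{g_i}$. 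Applying the Lefschetz identity to $g_i$ on $X_i$ (where the action is homologically trivial by induction) gives $\chi(X_{i+1})=\chi(X_i)=\chi(X)\ne 0$, so $X_{i+1}\ne\emptyset$; by Smith theory for prime-power-order homeomorphisms, $X_{i+1}$ is again a compact ENR with $\dim X_{i+1}<\dim X_i$. Finally I would apply an ENR analogue of Lemma \ref{lemma:trivial-homology} to the action of $G_i'$ on $X_{i+1}$ to obtain $G_{i+1}'\le G_i'$ of bounded index acting trivially on $H_*(X_{i+1};\ZZ)$. Because $\dim X_i$ strictly decreases, the iteration terminates in at most $n$ steps, and the resulting $G_k'$ has index in $G$ bounded by a constant depending only on $X$.

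The main obstacle is that the intermediate spaces $X_i$ are compact ENRs and not topological manifolds in general, so Lemma \ref{lemma:trivial-homology} does not apply verbatim. What is needed is a version of that lemma for compact ENRs whose homology is controlled in terms of $H_*(X;\ZZ)$. Smith-theoretic bounds $\sum_j\dim_{\FF_p}H_j(X_i;\FF_p)\le\sum_j\dim_{\FF_p}H_j(X;\FF_p)$ provide the required uniformity, ensuring that the constants in each iterated application of the lemma (and of the Lefschetz identity) depend only on $X$. Establishing that the fixed-set spaces $X_i$ remain inside a class of compact ENRs with uniformly bounded homology to which Lefschetz, Smith theory, and the homology-triviality reduction all apply is the principal technical step.
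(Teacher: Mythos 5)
Your first reduction (pass to a bounded-index subgroup that is abelian and acts trivially on $H_*(X;\ZZ)$, then use Lefschetz on $X$ to get that every element has a fixed point) is exactly how the paper starts. The divergence, and the genuine gap, is the induction over fixed-point sets. The spaces $X_{i+1}=X_i^{g_i}$ are not topological manifolds, need not be ENRs (fixed sets of topological prime-power-order actions are only $\ZZ/p$-cohomology manifolds in Alexander--Spanier cohomology, and can fail to be locally contractible), and their \emph{integral} homology need not be finitely generated, let alone bounded in terms of $X$. Smith theory gives you $\sum_j\dim_{\FF_p}H^j(X_i^{g_i};\FF_p)\leq\sum_j\dim_{\FF_p}H^j(X_i;\FF_p)$ only for the single prime $p$ dividing $\ord(g_i)$; it controls neither rational homology (needed for your bookkeeping $\chi(X_{i+1})=\chi(X_i)$ and for Lefschetz numbers at the next stage), nor mod-$q$ cohomology for the different prime $q$ that the next element $g_{i+1}$ may involve, nor the integral torsion that any Minkowski-type analogue of Lemma \ref{lemma:trivial-homology} needs to produce a bounded-index homologically trivial subgroup. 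Even in the smooth category, where fixed sets are submanifolds, the torsion of $H_*(X_{i+1};\ZZ)$ is not bounded in terms of $X$ (e.g.\ fixed sets of involutions are mod-$2$ homology manifolds that can carry arbitrarily large odd torsion), so the per-step index bound "depending only on $X$" fails. In short, the step you flag as "the principal technical step" is not a technicality: the class of spaces and the uniform constants you need for the iteration do not exist with the tools you invoke.

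The paper avoids iteration altogether. For an abelian group $A=\prod_pA_p$ acting with the weak fixed point property, Theorem \ref{thm:generic-point} (whose proof rests on the bounded stabilizer count of Theorem \ref{thm:CMPS} from \cite{CMPS} and the bounded chain length of Lemma \ref{lemma:descending-chain}) produces for each prime a bounded-index subgroup $A_p'\leq A_p$ and a \emph{single element} $a_p\in A_p'$ with $X^{a_p}=X^{A_p'}$. Setting $a=\prod_pa_p$, one application of Lefschetz on $X$ itself gives $X^a\neq\emptyset$, and the Chinese remainder theorem gives $X^a\subseteq\bigcap_pX^{A_p'}=X^{A'}$, so all the fixed-point-set analysis happens inside the ambient manifold $X$, where the homological hypotheses are available. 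If you want to rescue your outline, the fix is to replace the descending induction by this "one element detects the whole fixed set of a bounded-index subgroup" mechanism, which requires the stabilizer-bound input rather than an ENR/finite-generation property of the fixed sets.
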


In the previous theorem, compactness is essential.
This follows for example from the main result in \cite{HKMS}
(see the end of \cite[\S 1.1]{M6} for a detailed explanation).

Let us say that an action of a group $G$ on a manifold $X$ has the {\it weak fixed point property}
if for every $g\in G$ there is some $x\in X$ such that $g\cdot x=x$. Here the point
$x$ may depend on $g$, so a priori the weak fixed point property does not imply the existence
of a fixed point for the entire action. The next theorem implies that, nevertheless, for finite group
actions this turns out to be the case, up to passing to a subgroup of controlled index.

\begin{theorem}
\label{thm:from-weak-to-almost-fixed-point}
Let $X$ be a connected $n$-dimensional topological manifold with finitely generated $H_*(X;\ZZ)$.
For almost every action of a finite group $G$ on $X$ with the weak
fixed point property, the group $G$ is abelian, it can be generated by
$[n/2]$ or fewer elements, and its action on $X$ has a fixed point.
\end{theorem}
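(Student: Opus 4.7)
The plan is to pass first to a subgroup with trivial homology action, then extract a common fixed point by a Smith-theoretic induction grounded in the Lefschetz fixed point identity, and finally deduce the algebraic conclusions from a local analysis at the fixed point.

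First, I would apply Lemma \ref{lemma:trivial-homology} to replace $G$ by a subgroup $G_0 \leq G$ of index bounded in terms of $X$ alone, acting trivially on $H_*(X;\ZZ)$; the weak fixed point property descends to $G_0$. For every finite-order $g \in G_0$, the topological Lefschetz fixed point theorem for homeomorphisms of finite order on a manifold with finitely generated integral homology gives
\[
\chi(X^g) \;=\; L(g) \;=\; \chi(X).
\]

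Assume $\chi(X) \neq 0$. Then Theorem \ref{thm:nonzero-Euler-Jordan} supplies a further bounded-index subgroup $G_1 \leq G_0$ which is abelian and generated by at most $[n/2]$ elements. I would produce a common fixed point for a bounded-index subgroup of $G_1$ by induction on the rank of $G_1$: pick $g \in G_1$ of prime order $p$ generating a cyclic direct summand; since $G_1$ is abelian, $X^g$ is $G_1$-invariant; by Smith theory it is a $\ZZ/p$-cohomology manifold with finitely generated $\FF_p$-homology whose complexity is controlled by that of $X$; by the displayed identity $\chi(X^g)=\chi(X)\neq 0$; so an inductive hypothesis of the same form, applied to the induced $G_1/\langle g\rangle$-action on $X^g$ (of abelian rank strictly smaller and bounded by $[n/2]-1$), yields the required common fixed point. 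The total accumulated index is controlled in terms of $X$ alone because the induction has depth at most $[n/2]$.

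When $\chi(X) = 0$ the identity degenerates to $\chi(X^g) = 0$ and the induction does not start; this is the main obstacle I expect. In this regime the weak fixed point property together with trivial homology action is already very rigid, as illustrated by $X = S^1$ and $X = T^n$, where such hypotheses force any action to be trivial. I would treat this case separately by a structural analysis of the intersection pattern of the fixed sets $X^g$, exploiting the finite generation of $H_*(X;\ZZ)$. Once a common fixed point $x$ is secured in either case, a $G_1$-invariant neighborhood of $x$ is homeomorphic to $\RR^n$, and applying Theorem \ref{thm:homology-sphere} to a small invariant sphere around $x$ (of dimension $n-1$, for which the bound becomes $[n/2]$) yields the final bounded-index subgroup $G_2 \leq G_1$ that is abelian and generated by at most $[n/2]$ elements while still fixing $x$. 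The action of $(\ZZ/2)^2$ on $S^2$ shows that the passage to a proper subgroup is genuinely necessary, so the substance of the theorem lies in the uniform control of the accumulated index in terms of $X$, and the $\chi(X)=0$ case is the technical heart of the argument.
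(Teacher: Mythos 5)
Your proposal has genuine gaps, and they are not peripheral. First, the displayed identity $\chi(X^g)=L(g)=\chi(X)$ is not available in this setting: $X$ is only assumed to have finitely generated $H_*(X;\ZZ)$, not to be compact, and for noncompact manifolds the Lefschetz fixed point theorem fails in this form. Indeed \cite{HKMS} produces periodic maps of $\RR^7$ (which has $\chi=1$ and only trivial homology action) with empty fixed point set, so $\chi(X^g)\neq\chi(X)$ there; this is exactly the example the paper invokes to explain why compactness is essential in Theorem \ref{thm:compact-nonzero-Euler}. Second, the case $\chi(X)=0$, which the theorem does not exclude, is left as a ``structural analysis of the intersection pattern of the fixed sets'' --- that is, it is not proved, and you yourself identify it as the technical heart; likewise your appeal to Theorem \ref{thm:nonzero-Euler-Jordan} for abelianness and the rank bound is only legitimate when $\chi(X)\neq 0$. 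Third, the local step at the common fixed point is invalid for merely continuous actions: a fixed point of a topological finite group action need not have an invariant neighborhood homeomorphic to $\RR^n$, nor a small invariant $(n-1)$-sphere to which Theorem \ref{thm:homology-sphere} could be applied. The absence of local linearization is precisely why the paper routes the argument through the Borel--Smith conditions and the Dotzel--Hamrick theorem (Corollary \ref{cor:DH-local-manifold-with-boundary}) rather than through a tangent-space or linking-sphere analysis.

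For contrast, the paper's proof makes no case distinction on $\chi(X)$ and never linearizes locally: for a finite $p$-group $P$ acting effectively with the weak fixed point property, Theorem \ref{thm:generic-point} yields a bounded-index subgroup $K\leq P$ and an element $g\in K$ with $X^K=X^g$, which is nonempty by hypothesis; Corollary \ref{cor:DH-local-manifold-with-boundary} then embeds $K$ in $\GL(n,\RR)$, and Jordan's theorem together with the elementary rank bound for abelian $p$-subgroups of $\GL(n,\RR)$ gives the algebraic conclusions at the Sylow level; Theorem \ref{thm:CPS-abstract} upgrades this to arbitrary finite groups; and Theorem \ref{thm:fixed-point-abelian-group-action-weak-fixed-point} (decomposition into $p$-parts plus the Chinese remainder trick applied to $a=\prod_p a_p$) produces the common fixed point. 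You would need a substitute for each of these ingredients; as written, your argument covers at best compact $X$ with $\chi(X)\neq 0$, and even there the final local step is unjustified for topological actions.
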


We next explain some application of the previous theorem to symplectic geometry.
Let $(X,\omega)$ be a symplectic manifold. The group
of Hamiltonian diffeomorphisms of $X$, which we denote as usual by $\Ham(X,\omega)$,
consists of those diffeomorphisms $\phi\in\Diff(X)$ for which there exists a family
of diffeomorphisms $\{\phi_t\in\Diff(X)\}_{t\in[0,1]}$, smoothly
depending on $t$, satisfying
$\phi_0=\Id_X$,
$\phi_1=\phi$, and the following property:
\begin{quote}
for every $t\in [0,1]$ there exists some $H_t\in\cC^{\infty}(X)$
such that, for every vector field $\fF$ on $X$, the equality
$\omega(\partial_t\phi_t,\fF)=dH_t(\fF)$ holds.
\end{quote}
These properties imply in particular that $\phi$ is a symplectomorphism.
Choosing $\{\phi_t\}$ and $\{H_t\}$ appropriately we may assume that
$\{H_t\}$ extends to a $\ZZ$-periodic
smooth function of $t\in \RR$ (see e.g. \cite[\S 4.1]{AbbSchlenk}).
Suppose that $X$ is compact.
A standard argument implies that any $\phi\in\Ham(X,\omega)$
can be uniformly approximated by Hamiltonian diffeomorhisms all of whose fixed points are
nondegenerate. We may thus apply Arnold's conjecture (which is now a theorem by
\cite{FukayaOno,LiuTian})
to conclude that each $\phi\in\Ham(X,\omega)$ has a fixed point in $X$.
So the Theorem \ref{thm:from-weak-to-almost-fixed-point} implies the following:

\begin{corollary}
\label{cor:Ham}
Let $(X,\omega)$ be a $2m$-dimensional closed symplectic manifold.
For almost every action of a finite group $G$ on $X$
which is defined by a monomorphism $G\to \Ham(X,\omega)$
the group $G$ is abelian, it can be generated by $m$ or fewer elements,
and its action on $X$ has a fixed point.
\end{corollary}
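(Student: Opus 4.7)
The plan is that this corollary should be a direct application of Theorem \ref{thm:from-weak-to-almost-fixed-point} once the hypothesis on the weak fixed point property is verified. The paragraph preceding the corollary essentially carries out this reduction; my proof proposal would simply organize it into three steps.

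First, I would check that $X$ meets the hypotheses of Theorem \ref{thm:from-weak-to-almost-fixed-point}: being a closed manifold, $X$ has finitely generated $H_*(X;\ZZ)$, and connectedness is standard for symplectic manifolds (or can be applied component by component, since any $g\in\Ham(X,\omega)$ preserves each component and the bound $C$ of Theorem \ref{thm:from-weak-to-almost-fixed-point} multiplied over finitely many components still yields a uniform constant). With $n=2m$, the dimension bound in the conclusion becomes $[n/2]=m$, matching the statement.

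Second, I would verify the weak fixed point property for every action $G\hookrightarrow \Ham(X,\omega)$. For this I would fix $g\in G$ and, viewing $g$ as a Hamiltonian diffeomorphism $\phi$ generated by some time-dependent Hamiltonian $H_t$, uniformly approximate $\phi$ by a sequence $\phi_k\in \Ham(X,\omega)$ whose fixed points are all nondegenerate (this is the standard perturbation step, referenced in the paragraph above the corollary). The Arnold conjecture, now a theorem by Fukaya--Ono \cite{FukayaOno} and Liu--Tian \cite{LiuTian}, guarantees that each $\phi_k$ has a fixed point $x_k\in X$. Since $X$ is compact, I may pass to a convergent subsequence $x_k\to x$, and uniform convergence $\phi_k\to \phi$ forces $\phi(x)=x$. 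Hence every $g\in G$ fixes some point of $X$, which is exactly the weak fixed point property.

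Third, with both hypotheses in place, Theorem \ref{thm:from-weak-to-almost-fixed-point} applied to the action of $G$ on $X$ produces a constant $C=C(X)$ and, for every such $G$, a subgroup $H\leq G$ with $[G:H]\leq C$ such that $H$ is abelian, generated by at most $m$ elements, and acts on $X$ with a global fixed point. This is precisely the content of the corollary. There is essentially no genuine obstacle in this argument: the nontrivial analytic input (Arnold's conjecture) is quoted, the approximation-and-limit step is routine compactness, and the combinatorial/topological heart of the statement is entirely encapsulated in Theorem \ref{thm:from-weak-to-almost-fixed-point}.
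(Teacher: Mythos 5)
Your proposal is correct and follows essentially the same route as the paper: verify the weak fixed point property for finite subgroups of $\Ham(X,\omega)$ via the nondegenerate approximation plus Arnold's conjecture (with the compactness limit argument you spell out), then apply Theorem \ref{thm:from-weak-to-almost-fixed-point} with $n=2m$. The extra remarks on connectedness and passing to the limit of fixed points are fine elaborations of what the paper leaves implicit.
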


The statement that $G$ is almost always abelian in the previous corollary (equivalently, the
fact that $\Ham(X,\omega)$ is Jordan --- see below) was proved
in \cite{M7} using a different method. The existence of a fixed point
seems to be a new result.

\subsection{Jordan property}
Following Popov \cite{Po0}, we say that a group $\gG$ is Jordan if there exists some constant $C$ such that any
finite subgroup $\Gamma$ of $\gG$ has an abelian subgroup $A\leq\Gamma$ satisfying
$[\Gamma:A]\leq C$.
The most basic nontrivial examples are general
linear groups, for which the Jordan property was proved by C. Jordan in 1878 \cite{J}, using, of course,
a different terminology:

\begin{theorem}
\label{thm:Jordan}
For any $n$, $\GL(n,\RR)$ is Jordan.
\end{theorem}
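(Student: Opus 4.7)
The plan is to reduce to the compact case and then run a neighborhood-of-the-identity subgroup argument.

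First, given a finite subgroup $\Gamma\leq\GL(n,\RR)$, the averaged bilinear form $\la v,w\ra_\Gamma:=\sum_{g\in\Gamma}\la gv,gw\ra$ is $\Gamma$-invariant and positive definite, so after choosing an orthonormal basis for it the group $\Gamma$ is conjugated into $\O(n)\subseteq\U(n)$. Hence it suffices to prove that $\U(n)$ is Jordan. Equip $\U(n)$ with the operator norm $\|\cdot\|$; since this norm is bi-invariant under $\U(n)$, every ball $B_\epsilon:=\{g\in\U(n):\|g-I\|<\epsilon\}$ is invariant under conjugation.

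Second, for a finite subgroup $\Gamma\leq\U(n)$, I would set $N:=\la\Gamma\cap B_\epsilon\ra$. This is a normal subgroup of $\Gamma$ by the conjugation-invariance of $B_\epsilon$, and its index is bounded by a constant $C_n(\epsilon)$ depending only on $n$ and $\epsilon$: cover the compact group $\U(n)$ by finitely many balls of radius $\epsilon/2$; if two coset representatives $g_1,g_2$ were to share such a ball then $g_1g_2^{-1}\in B_\epsilon\cap\Gamma\subseteq N$, so the distinct cosets of $N$ in $\Gamma$ must lie in distinct balls.

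Third, the essential work is to show that $N$ is abelian once $\epsilon$ is chosen small enough (depending only on $n$). The main tool is the commutator inequality
\[
\|[g,h]-I\|=\|(gh-hg)g^{-1}h^{-1}\|\leq 2\|g-I\|\,\|h-I\|,
\]
valid in $\U(n)$. Choose $g\in\Gamma\setminus\{I\}$ realizing $\delta:=\min\{\|\gamma-I\|:\gamma\in\Gamma\setminus\{I\}\}$, and assume $\delta\leq\epsilon<1/2$. For any $h\in\Gamma\cap B_\epsilon$ the estimate yields $\|[g,h]-I\|<2\delta\epsilon<\delta$; since $[g,h]\in\Gamma$, the minimality of $\delta$ forces $[g,h]=I$. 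So $g$ is central in $N$.

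Finally, I would bootstrap centrality of one element to abelianness of $N$ by induction on $n$, and this is where I expect the main obstacle to lie. When $g$ is not a scalar, its eigenspace decomposition $\CC^n=\bigoplus_iV_i$ is $N$-invariant (because $g$ is central in $N$), every $V_i$ has dimension $<n$, and $N\hookrightarrow\prod_i\U(V_i)$; the inductive hypothesis then supplies an abelian subgroup of $N$ of index bounded by a constant depending only on $n$. The delicate case is when the minimum-norm elements of $\Gamma$ are all scalar matrices: here one has to descend to the projective unitary group $\U(n)/(\U(1)\cdot I)$, run the analogous argument on the projective image of $\Gamma$, and then lift the resulting abelian subgroup back through the central cyclic extension. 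This lifting step is subtle because a central extension of an abelian group by a cyclic one need not be abelian, so one must use a further minimum-element argument inside the scalar subgroup to extract an abelian subgroup of index bounded in terms of $n$ alone. Combining the bound $[\Gamma:N]\leq C_n(\epsilon)$ with the inductively obtained bound on $N$ then yields a Jordan constant $J(n)$ depending only on $n$.
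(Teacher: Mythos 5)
Most of your argument is sound and is the standard ``small neighborhood of the identity'' proof: the averaging trick reducing to $\U(n)$, the conjugation-invariance of $B_\epsilon$, the covering bound $[\Gamma:N]\leq C_n(\epsilon)$, the commutator estimate, and the observation that a minimal-norm element $g\in\Gamma\setminus\{\Id\}$ must commute with every element of $\Gamma\cap B_\epsilon$ are all correct, and in the case where such a $g$ is non-scalar your eigenspace decomposition plus induction on $n$ (using that a finite subgroup of a product of Jordan groups has an abelian subgroup of index bounded by the product of the constants) does close the argument. The genuine gap is exactly the case you flag: when the relevant small elements are scalar. There your proof stops being a proof. Knowing that a scalar is central in $N$ gives no information, and the plan ``descend to $\U(n)/(\U(1)\cdot \Id)$, run the analogous argument, and lift'' is not carried out and does not work as described: the preimage in $\Gamma$ of an abelian subgroup of the projective image is only nilpotent of class $2$, and class-$2$ finite groups in general (extraspecial $p$-groups are the standard example) have no abelian subgroups of index bounded independently of the group order, so no ``further minimum-element argument inside the scalar subgroup'' can succeed on purely group-theoretic grounds --- the required bound has to be extracted from the ambient dimension $n$. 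Moreover, running your second and third steps inside the projective unitary group would itself require a bi-invariant metric commutator estimate there, and any linearization of that group raises the dimension, so your induction on $n$ cannot be invoked for it.

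The standard way to close precisely this gap, staying within your framework, is an eigenvalue-rotation argument: choose $g$ of minimal norm among the \emph{non-scalar} elements of $\Gamma$ (if $\Gamma\cap B_\epsilon$ consists only of scalars then $N$ is already abelian and you are done). If some non-scalar element lies in $B_\epsilon$, then for $h\in\Gamma\cap B_\epsilon$ the commutator estimate and minimality force $[g,h]$ to be either $\Id$ or a scalar $\lambda \Id$ with $\lambda\neq 1$; in the latter case $ghg^{-1}=\lambda h$, so the spectrum of $h$, which lies within $\epsilon$ of $1$, would be invariant under multiplication by the root of unity $\lambda$, which is impossible for small $\epsilon$ because some power of $\lambda$ is far from $1$. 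Hence $[g,h]=\Id$, $g$ is a genuinely central non-scalar element of $N$, and your eigenspace induction applies. Without this (or an equivalent ingredient such as Jordan's original counting argument or a character-theoretic bound on class-$2$ subgroups of $\U(n)$), the proposal is incomplete at its acknowledged crux. For comparison, the paper does not prove this statement at all: it is quoted as Jordan's classical 1878 theorem with a reference.
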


\'E. Ghys asked around 1990 whether the diffeomorphism group of
any closed smooth manifold is Jordan. A number of papers \cite{M1,M4,Z2} appeared
in the last few years giving partial positive answers to Ghys's
question.  It 2014 B. Csik\'os, L. Pyber and E. Szab\'o \cite{CPS1} found the first
counterexamples to Ghys's question, showing that the
diffeomorphism group of $T^2\times S^2$ is not Jordan
(later, extending the ideas in \cite{CPS1}, the author and D.R. Szab\'o found many other examples,
see \cite{M6,Szabo}).

In contrast to diffeomorphism groups, the Jordan property for homeomorphism groups has received
less attention so far, with the exception of a paper of S. Ye \cite{Ye}, which proves that closed
flat manifolds such as tori have Jordan homeomorphism groups, and \cite{M9}, which proves that
rationally hypertoral manifolds have Jordan homeomorphism groups.
The results stated in Subsection \ref{ss:main-results} imply the following.
\begin{itemize}
\item The homeomorphism group of any closed and connected topological manifold of
dimension at most $3$ is Jordan (Theorem \ref{thm:low-dimension}).
\item Let $X$ be a connected topological manifold. If $H_*(X;\ZZ)$ is
finitely generated and $\chi(X)\neq 0$ then the homeomorphism group of $X$ is Jordan
(Theorem \ref{thm:nonzero-Euler-Jordan}).
\item If $X$ is an $n$-dimensional manifold satisfying $H_*(X;\ZZ)\simeq H_*(S^n;\ZZ)$
then the homeomorphism group of $X$ is Jordan (Theorem \ref{thm:homology-sphere}).
\item For any closed symplectic manifold $(X,\omega)$ the group of Hamiltonian
diffeomorphisms $\Ham(X,\omega)$ is Jordan (Corollary \ref{cor:Ham}).
\end{itemize}
The analogue of the first statement for diffeomorphism groups was proved
by Zimmermann in \cite{Z2}. Analogues of the second and third statements
for diffeomorphism groups have been proved in \cite{M4}.

In view of \cite{CPS1}, Ghys asked whether for any closed manifold $X$ almost every
finite group acting effectively on $X$ is nilpotent.
This has been recently proved by
B. Csik\'os, L. Pyber and E. Szab\'o \cite{CPS2} in full generality for continuous actions
on topological manifolds.

\subsection{Almost fixed point property}

We say that a topological
manifold $X$ has the almost fixed point property if there exists a constant $C$ such that for any
action of a finite group $G$ on $X$ there exists a point $x\in X$ whose stabilizer
$G_x=\{g\in G\mid g\cdot x=x\}$ satisfies $[G:G_x]\leq C$.
In this terminology, we may rephrase Theorem \ref{thm:compact-nonzero-Euler} as follows.
\begin{itemize}
\item Let $X$ be connected, compact, topological manifold with
nonzero Euler characteristic; then $X$ has the almost fixed point property.
\end{itemize}
This generalizes the main result in \cite{M8} in two directions. First, it replaces
smooth actions by continuous actions. Second, and more importantly, it replaces the hypothesis
of not having odd cohomology (see \cite{M8} for a precise definition) by the much weaker condition
of having nonzero Euler characteristic.

A simple but already interesting example
is the case of a closed disk. Let $n$ be a natural number and suppose
that a finite group $G$ acts on the closed $n$-disk $D^n$ by homeomorphisms. By Brouwer's fixed point theorem, for every $g\in G$ there exists at least one point in $D^n$ which is fixed by the action of $g$. A priori such fixed point depends on $g$.
If $n\leq 4$ then one can actually pick some $x\in D^n$ which is simultaneously fixed by all elements of $G$, but if $n\geq 6$ this is
no longer true (see the introduction in \cite{M8} for references). However, Theorem \ref{thm:compact-nonzero-Euler} implies the existence,
for each $n$, of a constant $\lambda\in (0,1]$, depending only on $n$,
with the property that for every action of a finite group $G$ on $D^n$ there exists some
point in $D^n$ which is fixed by at least $\lambda\cdot|G|$ elements of $G$.

\subsection{Discrete degree of symmetry}

A celebrated result of Mann and Su \cite{MS} states that for any closed manifold
$X$ there exists a constant $M$ with the following property: for any prime $p$ and
any natural number $m$ such that $(\ZZ/p)^m$ admits an effective action on $X$
we have $m\leq M$. This result is also true for manifolds with boundary and
whose integral homology is finitely generated (see Section \ref{ss:some-tools}). It follows that the set
$$\mu(X)=\{m\in\NN\mid \text{$X$ supports effective actions of $(\ZZ/r)^m$ for arbitrarily large $r$}\}$$
is finite: more precisely, $\mu(X)$ is contained in $\{1,\dots,M\}$
(of course, $\mu(X)$ may be empty). Following \cite{M9} we define
the discrete degree of symmetry of $X$ to be
$$\discsym(X)=\max (\{0\}\cup\mu(X)).$$
By \cite[Lemma 1.10]{M9}, for any nonnegative integer $k$ the inequality $\discsym(X)\leq k$
is equivalent to the statement that almost every finite abelian group acting effectively on
$X$ can be generated by $k$ or fewer elements (if $k=0$ the latter means by convention that
the abelian group is trivial). Consequently, Theorems \ref{thm:nonzero-Euler-Jordan}
and \ref{thm:homology-sphere} have the following implications, respectively:
\begin{itemize}
\item Let $X$ be an $n$-dimensional connected topological manifold. If $H_*(X;\ZZ)$ is finitely generated and $\chi(X)\neq 0$ then $\discsym(X)\leq [n/2]$.
\item Let $X$ be an $n$-dimensional topological manifold such that $H_*(X;\ZZ)\simeq H_*(S^n;\ZZ)$; then $\discsym(X)\leq [n/2]$.
\end{itemize}
It was asked at the end of \cite[\S 1.4]{M9} whether for any compact and connected $n$-dimensional manifold $X$ we have $\discsym(X)\leq n$.
This was motivated by the well known fact that if $X$ supports a continuous and effective action of a torus
$(S^1)^d$ then $d\leq n$, and the heuristic according to which the sequence of finite groups $(\ZZ/r)^m$, with $m$ fixed and $r\to\infty$,
can be thought of as increasingly good approximations of the torus $T^m$. But note that this heuristic has its limitations:
as explained in \cite[Theorem 1.14]{M9}, not every compact and connected manifold $X$ supports an effective and continuous action of $(S^1)^{\discsym(X)}$. Some evidence for the conjectural inequality $\discsym(X)\leq\dim X$
was given in \cite{M9}, and the previous results give some additional evidence.

\subsection{Some tools}
\label{ss:some-tools}
As usual, by a finite $p$-group we mean a finite group whose cardinal is a power of an arbitrary prime number $p$.

We next state some results that will play a key role in the proofs of our theorems.
While all the results that we mention in this section are originally proved only
for manifolds without boundary, all of them extend to the case of manifolds with
boundary thanks to the following result.

\begin{lemma}
\label{lemma:taking-boundary-away}
Let $X$ be a topological manifold. There is a topological manifold $X'$
without boundary, of the same dimension as $X$, and an embedding $X\hookrightarrow X'$
which is a homotopy equivalence and which has the property that any
action of a group $G$ on $X$ extends to an action on $X'$.
\end{lemma}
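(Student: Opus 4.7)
The plan is to obtain $X'$ by attaching an open collar to the boundary. Concretely, set
$$X' := X \cup_{\partial X} \bigl(\partial X \times [0,\infty)\bigr),$$
where a point $x\in\partial X\subset X$ is identified with $(x,0)\in\partial X\times\{0\}$, and equip $X'$ with the quotient topology inherited from the disjoint union. Both $X$ and $\partial X\times[0,\infty)$ sit as closed subsets of $X'$, meeting precisely along $\partial X$.

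To verify that $X'$ is a topological $n$-manifold without boundary, the essential ingredient is Brown's collar neighborhood theorem, which provides an embedding $c\colon\partial X\times [0,1)\hookrightarrow X$ onto an open neighborhood of $\partial X$ with $c(x,0)=x$. Away from $\partial X$ the local Euclidean structure of $X'$ is evident. Near a point $p\in\partial X$, choosing a Euclidean chart $V\subset\partial X$ around $p$, the neighborhood of $p$ in $X'$ is obtained by gluing $c(V\times[0,1))$ to $V\times[0,1)\subset\partial X\times[0,\infty)$ along $V\times\{0\}$, producing a set homeomorphic to $V\times(-1,1)\cong \RR^n$. Hence $X'$ is a manifold of the correct dimension with empty boundary. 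Moreover $X$ is a strong deformation retract of $X'$: the retraction $r\colon X'\to X$ is the identity on $X$ and sends $(x,t)\mapsto x$ on $\partial X\times[0,\infty)$, while the homotopy $H_s(x,t)=(x,(1-s)t)$ deforms $\Id_{X'}$ into $r$. Therefore the inclusion $X\hookrightarrow X'$ is a homotopy equivalence.

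To extend a given continuous $G$-action on $X$, I first observe that it restricts to a continuous action of $G$ on $\partial X$. I then define the action on $X'$ to coincide with the given action on $X\subset X'$ and with the product action $g\cdot(x,t)=(g\cdot x,t)$ on $\partial X\times[0,\infty)\subset X'$. These two prescriptions agree on the intersection $\partial X\equiv\partial X\times\{0\}$, so the pasting lemma for closed covers guarantees that, for each $g\in G$, the resulting self-map of $X'$ is continuous; applied to $g^{-1}$ it shows it is a homeomorphism, and the group-action axioms are inherited from the two pieces separately. The construction is essentially elementary, and the only delicate point is the appeal to Brown's collar theorem, which is nontrivial in the purely topological setting (unlike in the smooth category, no tubular neighborhood shortcut is available). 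Crucially, we never need an equivariant collar — the product structure on $\partial X\times[0,\infty)$ is set once and for all, independently of $G$, and only the restriction of the action to $\partial X$ plays a role in the extension.
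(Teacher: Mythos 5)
Your construction is exactly the one used in the paper: attach an external collar $\partial X\times[0,\infty)$ along $\partial X$ and extend the action by $g\cdot(x,t)=(g\cdot x,t)$, so the proposal is correct and follows essentially the same route. The only difference is that you spell out the details the paper leaves implicit (Brown's collar theorem to check that $X'$ is a boundaryless manifold, the deformation retraction, and the pasting lemma for continuity), all of which are handled correctly.
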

\begin{proof}
Let $U=\partial X\times [0,1)$ and let $X'=(X\sqcup U)/\sim$, where $\sim$ identifies
each $x\in\partial X$ with $(x,0)\in U$. The natural inclusion $X\hookrightarrow X'$
is clearly a homotopy equivalence. Given an action of a group $G$ on $X$ we extend
it to an action on $X'$ by declaring the action of $g\in G$ on $(x,t)\in U$ to be
$(g\cdot x,t)$.
\end{proof}

The first result, which is a particular case of \cite[Theorem 1.8]{CMPS},
is an extension of the theorem of Mann and Su \cite{MS}
to non necessarily compact manifolds.

\begin{theorem}
\label{thm:MS-non-compact}
Let $X$ be a topological manifold. If $H_*(X;\ZZ)$ is finitely generated
then there exists a constant $M$ with this property: for any prime $p$ and
any natural number $m$ such that $(\ZZ/p)^m$ admits an effective action on $X$
we have $m\leq M$.
\end{theorem}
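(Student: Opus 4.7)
The plan is to adapt the original Mann--Su argument \cite{MS} by pinpointing where it uses closedness of the manifold and replacing that input by the finite generation hypothesis on $H_*(X;\ZZ)$. As a preliminary reduction, Lemma \ref{lemma:taking-boundary-away} allows me to assume $\partial X=\emptyset$: the extension $X'$ is homotopy equivalent to $X$ so $H_*(X';\ZZ)$ remains finitely generated, any $(\ZZ/p)^m$-action on $X$ extends to $X'$, and the extended action is still effective because $X\hookrightarrow X'$ is an invariant subspace on which the original action was effective.

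Since $H_*(X;\ZZ)$ is finitely generated, the universal coefficient theorem yields a constant $N=N(X)$ such that $\sum_i\dim_{\ZZ/p}H_i(X;\ZZ/p)\leq N$ for every prime $p$. This is precisely the hypothesis that classical Borel--Smith theory requires. For an action of $G=(\ZZ/p)^m$ on $X$, the Smith inequality
$$\sum_i\dim_{\ZZ/p}H_i(X^H;\ZZ/p)\leq\sum_i\dim_{\ZZ/p}H_i(X;\ZZ/p)\leq N,$$
valid for every subgroup $H\leq G$, together with a pigeonhole argument on the lattice of subgroups of $G$ (the number of subgroups for which $X^H$ changes nontrivially is controlled by $N$), produces a subgroup $H\leq G$ with $[G:H]$ bounded by a function of $N$ and with $X^H\neq\emptyset$.

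Fix $x\in X^H$. Newman's theorem for topological $p$-group actions on connected manifolds shows that no nontrivial element of $H$ can act trivially on an open neighborhood of $x$ (otherwise it would act trivially on all of $X$, contradicting effectiveness of $G$). Hence $H$ embeds into the group of germs at the origin of homeomorphisms of $\RR^n$. The local Mann--Su bound applied to an invariant Euclidean chart around $x$ (or, equivalently, to its one-point compactification $S^n$) then gives $\rk H\leq M_0(n)$ for some $M_0$ depending only on $n$. Combining with the bound on $[G:H]$ shows that the rank $m$ of $G$ is uniformly bounded by some $M=M(X)$, as required.

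The main obstacle is verifying that Smith theory and Newman's theorem are really available in the purely topological, non-compact setting. Both are classical, but they are often stated under compactness or smoothness hypotheses, and the Borel cohomological machinery must be set up with coefficients and finiteness conditions adapted to a non-compact $X$ whose $\ZZ/p$-cohomology is finite-dimensional even though the space itself is not compact. This is what \cite[Theorem 1.8]{CMPS} carries out in detail, yielding Theorem \ref{thm:MS-non-compact} as a direct consequence.
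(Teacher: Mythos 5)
The paper itself offers no internal proof of Theorem \ref{thm:MS-non-compact}: it is quoted as a particular case of \cite[Theorem 1.8]{CMPS}, so your closing appeal to that reference coincides with the paper's treatment. The problem lies in the self-contained sketch you wrap around the citation, which contains a step that is false. You claim that the Smith inequality plus a pigeonhole argument on the subgroup lattice produces a subgroup $H\leq G=(\ZZ/p)^m$ with $[G:H]$ bounded by a function of $N=\sum_i\dim_{\ZZ/p}H_i(X;\ZZ/p)$ and $X^H\neq\emptyset$. Take $X=T^m$ with $(\ZZ/p)^m$ acting freely by translations: $H_*(X;\ZZ)$ is finitely generated, $N=2^m$ is independent of $p$, yet the only subgroup with nonempty fixed-point set is the trivial one, whose index $p^m$ is unbounded as $p\to\infty$. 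Hence no such bound exists, and any proof of a Mann--Su-type theorem must handle actions with small (or trivial) isotropy by a mechanism other than producing a fixed point of a bounded-index subgroup. This is precisely the part of the argument of \cite{MS} and \cite{CMPS} that your outline skips: there the rank of isotropy subgroups is bounded by fixed-point (Smith--Borel) theory, while the complementary ``almost free'' contribution is bounded using the finiteness of the total mod $p$ cohomology through the Borel construction; your reduction collapses these two distinct steps into one that cannot work.

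A second gap: even at a genuine fixed point $x\in X^H$ of a topological action there is in general no invariant Euclidean chart around $x$, so one cannot ``apply the local Mann--Su bound to an invariant chart'' or pass to its one-point compactification; topological actions need not be locally linearizable (compare the discussion of Zimmermann's examples at the start of Section \ref{s:DH}), and Newman's theorem only rules out elements acting trivially near $x$, it does not provide such a chart. The correct local input in this setting is Smith--Borel theory on $\ZZ/p$-cohomology manifolds (Borel's formula, or the Dotzel--Hamrick argument underlying Corollary \ref{cor:DH-local-manifold-with-boundary}), which bounds the rank of an elementary abelian $p$-group fixing a point of an $n$-manifold purely in terms of $n$, without any chart. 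The parts of your sketch that do work are the reduction to empty boundary via Lemma \ref{lemma:taking-boundary-away} (the extended action is indeed still effective) and the uniform bound on $\sum_i\dim_{\ZZ/p}H_i(X;\ZZ/p)$ over all primes coming from universal coefficients; but between these and the final citation the argument as written does not go through.
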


The next theorem is based on results from \cite{CPS2}.

\begin{theorem}
\label{thm:CPS-abstract}
Let $X$ be a topological manifold. Suppose that $H_*(X;\ZZ)$ is finitely generated.
For every positive number $C$ there exists a positive number $C'$ with the
following property. Let $G$ be a finite group acting effectively on $X$.
Suppose that for every prime $p$ and any Sylow $p$-subgroup $G_p$ of $G$
there is an abelian subgroup $G_p^a\leq G_p$ satisfying $[G_p:G_p^a]\leq C$.
Then there is an abelian subgroup $G^a\leq G$ satisfying $[G:G^a]\leq C'$.
\end{theorem}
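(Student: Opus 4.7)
The plan is to reduce to the case of a nilpotent group by invoking the theorem of Csikós–Pyber–Szabó, and then to assemble an abelian subgroup out of abelian subgroups of the Sylow factors. By the result of \cite{CPS2} cited in the introduction, there is a constant $D$, depending only on $X$, such that every finite group acting effectively on $X$ admits a nilpotent subgroup of index at most $D$; this rests on $H_*(X;\ZZ)$ being finitely generated and on Theorem~\ref{thm:MS-non-compact}. Applying this to $G$, pass to a nilpotent subgroup $N\leq G$ with $[G:N]\leq D$.

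Decompose $N$ as the internal direct product $N=\prod_p N_p$ of its Sylow subgroups. For each prime $p$, choose a Sylow $p$-subgroup $G_p$ of $G$ containing $N_p$, let $G_p^a\leq G_p$ be the abelian subgroup of index at most $C$ given by the hypothesis, and set
\[
A_p := \begin{cases} N_p & \text{if } N_p \text{ is abelian,}\\ N_p\cap G_p^a & \text{otherwise.}\end{cases}
\]
Then $A_p$ is abelian and $[N_p:A_p]\leq C$ in every case, and $A:=\prod_p A_p$ is an abelian subgroup of $N$ satisfying
\[
[G:A] \;\leq\; D\cdot [N:A] \;=\; D\prod_p [N_p:A_p] \;\leq\; D\cdot C^{k},
\]
where $k$ is the number of primes $p$ at which $N_p$ is non-abelian.

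The main obstacle is to bound $k$ by a quantity depending only on $X$ and $C$. Theorem~\ref{thm:MS-non-compact} already forces each $N_p$ to be generated by at most $M=M(X)$ elements, so there are strong constraints on the structure of each Sylow factor. What is needed beyond this is a threshold statement: there exists $p_0=p_0(X,C)$ such that every $p$-group with $p>p_0$, acting effectively on $X$ and containing an abelian subgroup of index $\leq C$, is itself abelian. I expect this to be the technical heart of the proof, established by combining the Mann–Su rank bound with the $p$-group structural ideas of \cite{CPS2}. Granting such a threshold, the primes contributing to $k$ all satisfy $p\leq p_0$, so $k\leq\pi(p_0)$, and it suffices to set $C':=D\cdot C^{\pi(p_0)}$ to obtain the desired abelian subgroup of index at most $C'$ in $G$.
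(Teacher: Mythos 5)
Your route is genuinely different from the paper's. The paper treats this theorem as a quotation exercise in which the only geometric input is the rank bound of Theorem~\ref{thm:MS-non-compact}; the conclusion is then drawn from purely group-theoretic results (\cite[Corollary 3.18, Lemmas 5.3 and 6.1]{CPS2}, or alternatively \cite[Theorem 3.8]{MT}), which in effect say that a finite group of bounded rank at every prime whose Sylow subgroups are almost abelian is itself almost abelian. You instead invoke the geometric almost-nilpotency theorem of \cite{CPS2} and then argue Sylow-by-Sylow inside the nilpotent subgroup $N=\prod_p N_p$; that part is correct ($A_p:=N_p\cap G_p^a$ is abelian of index at most $C$ in $N_p$, the $A_p$ generate an abelian $A$ with $[G:A]\leq D\,C^{k}$). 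Moreover, the step you single out as the technical heart is actually trivial and needs neither Mann--Su nor any structure theory: in a finite $p$-group all subgroup indices are powers of $p$, so if $p>C$ the hypothesis $[G_p:G_p^a]\leq C$ forces $G_p^a=G_p$. Hence every Sylow $p$-subgroup of $G$ with $p>C$ is abelian, so is $N_p$, and $k$ is at most the number of primes $\leq C$; your constant becomes $C'=D\,C^{k}$ with $k\leq C$.

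The genuine gap is the black box you start from. The almost-nilpotency theorem of \cite{CPS2} is stated for compact manifolds (this is what its title says and how the introduction quotes it), whereas Theorem~\ref{thm:CPS-abstract} assumes only that $H_*(X;\ZZ)$ is finitely generated, and this extra generality is used later (for instance in the proof of Theorem~\ref{thm:from-weak-to-almost-fixed-point}, where $X$ need not be compact). Your parenthetical assertion that the nilpotency theorem ``rests on $H_*(X;\ZZ)$ being finitely generated and on Theorem~\ref{thm:MS-non-compact}'' is exactly what would have to be proved, not something you can cite. This is precisely why the paper's proof avoids the geometric theorem and instead combines Theorem~\ref{thm:MS-non-compact} (valid for finitely generated homology by \cite{CMPS}) with the abstract results of \cite{CPS2} or \cite{MT}, which involve no manifold at all. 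So either restrict your argument to compact $X$, which proves less than the stated theorem, or replace your first step by those group-theoretic inputs --- at which point the Sylow-decomposition argument you carry out is essentially subsumed by them.
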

\begin{proof}
Either combine Theorem \ref{thm:MS-non-compact}, \cite[Corollary 3.18]{CPS2}, \cite[Lemma 6.1]{CPS2}
and \cite[Lemma 5.3]{CPS2}, or alternatively Theorem \ref{thm:MS-non-compact}, \cite[Lemma 6.1]{CPS2},
\cite[Lemma 5.3]{CPS2} and \cite[Theorem 3.8]{MT}.
\end{proof}

The following consequence is immediate.

\begin{corollary}
\label{cor:CPS}
Let $X$ be a topological manifold such that $H_*(X;\ZZ)$ is finitely generated.
Suppose that almost every finite $p$-group that acts effectively on $X$
is abelian. Then almost every finite group that acts effectively on $X$
is abelian.
\end{corollary}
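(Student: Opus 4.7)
The plan is to unpack the ``almost every'' hypothesis as a single uniform constant and then feed it directly into Theorem \ref{thm:CPS-abstract}. First, I would observe that by the definition given in Subsection \ref{ss:main-results}, the hypothesis that almost every finite $p$-group acting effectively on $X$ is abelian gives one constant $C$, depending only on $X$, such that for every prime $p$ and every finite $p$-group $P$ acting effectively on $X$, there is an abelian subgroup $P^a\leq P$ with $[P:P^a]\leq C$. The crucial point here is that $C$ does not depend on the prime $p$: this is forced by the definition, since the hypothesis applies uniformly to all finite group actions (in particular to all $p$-groups simultaneously).

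Next, let $G$ be any finite group acting effectively on $X$. For each prime $p$ and each Sylow $p$-subgroup $G_p\leq G$, the restricted action of $G_p$ on $X$ is still effective (subgroups of groups acting effectively act effectively), so $G_p$ is a finite $p$-group acting effectively on $X$. Applying the hypothesis, I obtain an abelian subgroup $G_p^a\leq G_p$ with $[G_p:G_p^a]\leq C$.

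Now the hypotheses of Theorem \ref{thm:CPS-abstract} are satisfied for $G$ with the constant $C$ above, so Theorem \ref{thm:CPS-abstract} produces a constant $C'$, depending only on $X$ and $C$ (hence only on $X$), and an abelian subgroup $G^a\leq G$ with $[G:G^a]\leq C'$. Since $G$ was an arbitrary finite group acting effectively on $X$ and $C'$ depends only on $X$, this is exactly the statement that almost every finite group acting effectively on $X$ is abelian.

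There is no real obstacle in this proof; the entire content is packed into Theorem \ref{thm:CPS-abstract}, which reduces the study of finite groups to that of their Sylow subgroups via results of \cite{CPS2} (together with the Mann--Su bound of Theorem \ref{thm:MS-non-compact}). The only subtlety to be careful about is the uniformity of $C$ across all primes $p$, but as noted above this is built into the ``almost every'' formalism of the paper.
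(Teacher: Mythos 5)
Your proposal is correct and matches the paper's intent exactly: the paper declares the corollary an immediate consequence of Theorem \ref{thm:CPS-abstract}, and your argument simply spells out that deduction (uniform constant $C$ from the ``almost every'' hypothesis, applied to Sylow $p$-subgroups, which act effectively). No issues.
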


If a group $G$ acts on a manifold $X$ we denote the set of stabilizers of the action as
$$\Stab(G,X)=\{G_x\mid x\in X\}.$$
The following is part of \cite[Theorem 1.3]{CMPS}.

\begin{theorem}
\label{thm:CMPS}
Let $X$ be a topological manifold. If $H_*(X;\ZZ)$ is finitely generated then
there exists a constant $C$ such that for almost every action of
a finite $p$-group $G$ on $X$ we have $|\Stab(G,X)|<C.$
\end{theorem}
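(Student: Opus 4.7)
The plan is to reduce to the case of an effective abelian $p$-group action, where stabilizers can be identified with their fixed-point sets, and then to bound the number of distinct fixed-point sets by a Smith-theoretic argument using the Mann--Su bound (Theorem \ref{thm:MS-non-compact}).

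For the first reductions: after passing to a subgroup of index bounded by a constant depending only on $X$, we may assume $G$ is abelian. This requires a $p$-group analog of \cite[Corollary 3.18]{CPS2}, combining Theorem \ref{thm:MS-non-compact} with the structure of $p$-groups of bounded rank. Next, let $K$ be the kernel of the action; since every stabilizer contains $K$, the map $H\mapsto H/K$ is a bijection $\Stab(G,X)\to\Stab(G/K,X)$, so we may replace $G$ by $G/K$ and assume the action is effective. Theorem \ref{thm:MS-non-compact} then bounds the $p$-rank of $G$ by some $M=M(X)$.

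The key reformulation is that $|\Stab(G,X)|$ is at most the number of distinct fixed-point sets $X^H$ as $H$ ranges over subgroups of $G$. Indeed, if $H_1,H_2\in\Stab(G,X)$ satisfy $X^{H_1}=X^{H_2}$, then $\langle H_1,H_2\rangle$ fixes this same set; choosing $x\in X$ with $G_x=H_1$ yields $\langle H_1,H_2\rangle\leq G_x=H_1$, hence $H_2\leq H_1$, and symmetrically $H_1=H_2$.

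The main obstacle is bounding the number of distinct fixed-point sets $\fF=\{X^H: H\leq G\}$ by a constant independent of $p$. A crude subgroup count grows with $p$ and is useless; instead, one should show that any chain in $\fF$ has length bounded by $\sum_i\dim_{\FF_p}H_i(X;\FF_p)$ via an iterated refined Smith inequality (detecting strict drops in total $\FF_p$-Betti number, as provided for instance by the Borel spectral sequence or the localization theorem in equivariant $\FF_p$-cohomology), while the antichain width is controlled by induction on the Mann--Su rank $M$, applied to the residual action of $G/H$ on each $X^H$. Since $H_*(X;\ZZ)$ is finitely generated, the universal coefficient theorem ensures that $\sum_i\dim_{\FF_p}H_i(X;\FF_p)$ is bounded uniformly in $p$, yielding the desired constant $C=C(X)$.
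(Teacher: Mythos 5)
You should first know that the paper does not prove Theorem \ref{thm:CMPS} at all: it is imported verbatim as part of \cite[Theorem 1.3]{CMPS}, so your proposal has to stand as a self-contained proof, and as such it has genuine gaps. The very first reduction fails. It is not true that for every $X$ with finitely generated integral homology almost every finite $p$-group acting effectively on $X$ is abelian, with an index bound depending only on $X$: the examples of \cite{CPS1} give, for every prime $p$, an effective smooth action on $T^2\times S^2$ (a closed $4$-manifold, so its homology is finitely generated) of the mod $p$ Heisenberg group of order $p^3$, whose abelian subgroups all have index at least $p$, even though its rank is $2$. So the Mann--Su bound of Theorem \ref{thm:MS-non-compact} cannot produce the ``$p$-group analog of \cite[Corollary 3.18]{CPS2}'' you invoke; any abelian-index bound for $p$-groups of bounded rank necessarily depends on $p$. (Passing to $G/K$ to assume effectiveness, and the injectivity of $H\mapsto X^H$ on $\Stab(G,X)$, are fine.)

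The counting step also does not work as stated. A strict inclusion $X^{H}\subsetneq X^{H'}$ of fixed-point sets does not force a strict drop of the total $\FF_p$-Betti number: a rotation of $S^2$ has fixed set two points, with the same total Betti number as $S^2$, and coordinatewise rotations of $(S^2)^N$ give chains of length $N$ inside a single manifold along which the total Betti number is constant. Chains of fixed-point sets are indeed of bounded length, but for the reasons of Lemma \ref{lemma:descending-chain} (a Smith-theoretic bound on the number of connected components combined with the decrease of dimension), not via Betti-number drops. More seriously, the antichain bound --- which is the actual heart of the theorem --- is not an argument: $G/H$ does not act on $X^H$ unless $H$ is normal (only $N_G(H)/H$ does); the rank of the relevant quotients need not be smaller than that of $G$ (take $H$ inside the Frattini subgroup), so the proposed induction on the Mann--Su rank is not well founded; and nothing in the sketch explains why a family of subgroups whose cardinality grows with $p$ can realize only boundedly many distinct fixed-point sets or stabilizers. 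That statement is precisely the content of \cite[Theorem 1.3]{CMPS}, whose proof requires substantially more than the ingredients you list.
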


\subsection{Contents} Section \ref{s:almost-fixed-points} contains sobre
results on almost fixed points, in Section \ref{s:DH} we prove how the
existence of a fixed point for a finite $p$-group action on an $n$-manifold
implies that  the group can be embedded in $\GL(n,\RR)$,
and finally in Section \ref{s:proof} we prove the
theorems stated above.

\subsection{Acknowledgements} The author is very pleased to thank L. Pyber for sending him
a copy of the paper \cite{CPS2}, on which most of the
results in this paper are based. Many thanks also to E. Szab\'o for useful correspondence.

\section{Almost fixed points}
\label{s:almost-fixed-points}

From now on, cohomology will implicitly refer to
Alexander--Spanier cohomology. This is canonically isomorphic to singular cohomology
for topological manifolds, but it is better behaved than singular homology when
considering fixed point sets of finite $p$-group actions on topological manifolds.
Similarly, Betti numbers are to be defined as dimensions of Alexander--Spanier cohomology
groups, and the Euler characteristic is, when defined, the alternate sum of the dimensions of
Alexander--Spanier cohomology groups.

\begin{lemma}
\label{lemma:p-group-nonzero-Euler-characteristic}
Let $X$ be a topological manifold. If $H_*(X;\ZZ)$ is finitely generated and
$\chi(X)\neq 0$ then almost every finite $p$-group action on $X$ has a fixed point.
\end{lemma}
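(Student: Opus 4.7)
The plan is to prove the stronger statement that \emph{every} finite $p$-group action on $X$ admits a point whose stabilizer has index at most $|\chi(X)|$ in $G$; the lemma then follows with the uniform constant $C = |\chi(X)|$, valid for all primes $p$ at once.

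The crux is a sharpening of Floyd's Euler characteristic congruence: for a finite $p$-group $G$ acting on a space $Y$ of the kind under consideration, if every point stabilizer $G_y$ has index at least $p^s$ in $G$, then $p^s$ divides $\chi(Y)$. Once this is in hand, applying it to $Y=X$ with $s = v_p(\chi(X))+1$ and taking the contrapositive produces some $x \in X$ with $[G:G_x] < p^{v_p(\chi(X))+1}$, and since $[G:G_x]$ is itself a power of $p$ this forces $[G:G_x] \leq p^{v_p(\chi(X))} \leq |\chi(X)|$.

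To establish the divisibility I would induct on $|G|$, the case $|G|=1$ being vacuous. For $|G|>1$ I would pick a central element $g \in Z(G)$ of order $p$ and decompose $Y = Y^g \sqcup (Y \setminus Y^g)$. If $Y^g = \emptyset$, then $\la g\ra$ acts freely on $Y$, so $\chi(Y) = p\cdot\chi(Y/\la g\ra)$; the induced $G/\la g\ra$-action on $Y/\la g\ra$ has every stabilizer of index $[G:G_y]/p \geq p^{s-1}$ (each $G_y$ meets $\la g\ra$ only at the identity by freeness and therefore injects into $G/\la g\ra$), so induction on $|G|$ gives $p^{s-1}\mid\chi(Y/\la g\ra)$ and hence $p^s\mid\chi(Y)$. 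If $Y^g \neq \emptyset$, additivity of $\chi$ along the closed-open decomposition gives $\chi(Y) = \chi(Y^g) + \chi(Y\setminus Y^g)$; on $Y^g$ every stabilizer contains $\la g\ra$, so the induced $G/\la g\ra$-action preserves all indices $\geq p^s$ and induction gives $p^s\mid\chi(Y^g)$, while the preceding free-quotient argument applied to the free $\la g\ra$-action on $Y\setminus Y^g$ gives $p^s\mid\chi(Y\setminus Y^g)$.

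The hard part will be to justify the two Euler characteristic identities --- the free-quotient formula $\chi(Y) = p\cdot\chi(Y/\la g\ra)$ and the closed-open additivity --- in the continuous, possibly non-compact setting, where $Y^g$ and $Y/\la g\ra$ need not themselves be topological manifolds. For this I would invoke classical Smith theory: fixed sets and orbit spaces of $p$-group actions on finitistic spaces with finitely generated $\FF_p$ Alexander--Spanier cohomology retain both properties; the Alexander--Spanier Euler characteristic with $\FF_p$ coefficients agrees with the integral one whenever $H_*(X;\ZZ)$ is finitely generated (a degree-by-degree Universal Coefficient calculation); and Poincar\'e duality permits passage between ordinary and compactly supported Euler characteristics on the manifold pieces, legitimizing the additivity step on the open set $Y\setminus Y^g$. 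Since the recursion never leaves this class of ``good'' spaces, the induction closes.
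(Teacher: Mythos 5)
Your group-theoretic skeleton (induction over a central element $g$ of order $p$, the stabilizer bookkeeping for the actions of $G/\la g\ra$ on $Y^g$ and on $Y/\la g\ra$, and the final contrapositive yielding the constant $|\chi(X)|$) is sound, and your endgame coincides with the paper's. The genuine gap is in the two Euler-characteristic identities on which everything rests. The closed--open additivity $\chi(Y)=\chi(Y^g)+\chi(Y\setminus Y^g)$ is simply false for ordinary (Alexander--Spanier) Euler characteristics: already for $\ZZ/2$ acting on $Y=\RR$ by $x\mapsto -x$ one has $\chi(\RR)=1$ while $\chi(\{0\})+\chi(\RR\setminus\{0\})=1+2=3$. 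Your proposed repair --- passing to compactly supported Euler characteristics via Poincar\'e duality ``on the manifold pieces'' --- is exactly where the difficulty lives and is not carried out: after one step of the recursion the spaces are no longer manifolds but $\ZZ/p$-cohomology manifolds whose components may have different dimensions, so the comparison $\chi_c=\pm\chi$ carries component-dependent signs that your divisibility bookkeeping does not control; the Smith-theoretic finiteness results you invoke concern closed-support cohomology and give neither finiteness of $H^*_c$ of the strata nor finite generation of $H^*(Y\setminus Y^g;\FF_p)$, which you also assert without argument; and at the very end your hypothesis concerns the ordinary $\chi(X)$ of a manifold that is allowed to have boundary, where $\chi_c\neq\pm\chi$ in general (e.g. $[0,1)$), so a divisibility statement for $\chi_c$ would not produce the contradiction unless you first remove the boundary as in Lemma \ref{lemma:taking-boundary-away}, which you do not do.

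Note also that the divisibility statement you are trying to establish is essentially the result the paper imports from \cite[Theorem 2.5]{Ye2} (going back to Floyd), and the classical proof avoids the open stratum altogether: for a finitistic space $Y$ with finitely generated $\FF_p$ Alexander--Spanier cohomology and $g$ of order $p$, Floyd's formula $\chi(Y)+(p-1)\chi(Y^{g})=p\,\chi(Y/\la g\ra)$ holds for ordinary Euler characteristics, and the class of such spaces is preserved under taking fixed sets and finite quotients (no complements ever appear). Substituting this formula into your induction closes the argument immediately: by induction $p^{s}\mid\chi(Y^g)$ and $p^{s-1}\mid\chi(Y/\la g\ra)$, hence $p^s\mid\chi(Y)$, and your contrapositive then gives a point with $[G:G_x]\leq|\chi(X)|$, which is the same constant the paper obtains. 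By contrast, the paper's own proof does not run this induction: it stratifies $X$ by stabilizer order and quotes Ye's theorem both for the divisibility of each stratum's Euler characteristic and for the additivity $\chi(X)=\sum_i\chi(X_i)$, which is precisely the analytic content your sketch leaves unproved.
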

\begin{proof}
Let $p$ be a prime and let $G$ be a finite $p$-group
acting continuously and effectively on $X$. Let the order of $G$ be $p^m$.
For any nonnegative integer $i$
let $X_i$ be the set of points in $X$ whose stabilizer has order $p^i$.
We may view $H^j(X_i;\ZZ/p)$ as a vector space over the field $\ZZ/p$,
and we accordingly define $b_j(X_i;\ZZ/p)=\dim_{\ZZ/p} H^j(X_i;\ZZ/p)$.
By \cite[Theorem 2.5]{Ye2}
we have
$b_j(X_i;\ZZ/p)<\infty$ for each $i,j$; furthermore, $b_j(X_i;\ZZ/p)=0$ for big enough $j$,
and $\chi(X_i)=\sum_{j\geq 0} (-1)^jb_j(X_i;\ZZ/p)$ is divisible by $p^{m-i}$ for every $i$;
finally, $\chi(X)=\sum_{i\geq 0} \chi(X_i)$. Hence, if we let $p^i$ be the largest power of $p$
that divides $\chi(X)$, which of course satisfies $p^i\leq |\chi(X)|$, we necessarily have $X_{m-k}\neq \emptyset$ for some $k\leq i$. It follows that there exists a subgroup $G'\leq G$ satisfying $[G:G']\leq |\chi(X)|$ and
$X^{G'}\neq\emptyset$.
\end{proof}

The following is an analogue for continuous finite $p$-group actions on topological manifolds
of \cite[Lemma 9]{M8}, which refers to smooth group actions on smooth manifolds.
For the definition of $\ZZ/p$-cohomology manifold, see \cite[Chap I]{Bo}.

\begin{lemma}
\label{lemma:descending-chain}
Let $X$ be a topological manifold without boundary.
If $H_*(X;\ZZ)$ is finitely generated then here exists a constant $D$ with the following
property. Let $p$ be any prime. Suppose given a chain of strict inclusions of $\ZZ/p$-cohomology submanifolds
$$X_1\subsetneq X_2\subsetneq\dots\subsetneq X_r\subseteq X,$$
where for each $j$ there is a finite $p$-group $G_j$ acting continuously on $X$ in such a way that
$X_j$ is the union of some of the connected components of $X^{G_j}$. Then
$r\leq D$.
\end{lemma}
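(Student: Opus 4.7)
The strategy is to adapt the proof of the smooth analogue \cite[Lemma 9]{M8} to the cohomology-manifold setting using Smith--Borel theory \cite{Bo}. I would track two integer invariants of each $X_j$ --- its dimension $d_j := \dim X_j$ and its number of connected components $c_j := |\pi_0(X_j)|$ --- and show that the pair $(d_j,c_j)$ is strictly lexicographically increasing along the chain.

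Since $H_*(X;\ZZ)$ is finitely generated, only finitely many primes contribute torsion, so by universal coefficients
$$B := \sup_{q\text{ prime}} \sum_{i\geq 0} \dim_{\ZZ/q} H^i(X;\ZZ/q)$$
is finite. Each $X_j$ is a union of connected components of the fixed point set of a finite $p$-group $G_j$ acting continuously on $X$, so Borel's Smith-theoretic results \cite[Chap.~I]{Bo} imply that $X_j$ is itself a $\ZZ/p$-cohomology manifold satisfying $\sum_i \dim_{\ZZ/p} H^i(X_j;\ZZ/p) \leq B$. In particular $d_j \in \{0,\dots,\dim X\}$ and $c_j \in \{1,\dots,B\}$. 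Since $X_j$ is closed in $X_{j+1}$ (being closed in $X$ as a fixed point set), monotonicity of cohomological dimension for closed subsets of finite-dimensional cohomology manifolds yields $d_j \leq d_{j+1}$. When $d_j < d_{j+1}$ the lex pair already strictly increases. When $d_j = d_{j+1}$, I would invoke the cohomology-manifold version of invariance of domain (a consequence of the local cohomology characterization in \cite[Chap.~I]{Bo}) to conclude that $X_j$ is also open in $X_{j+1}$; being both open and closed and strictly contained, $X_j$ is then a proper union of connected components of $X_{j+1}$, giving $c_j < c_{j+1}$.

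Since the pair $(d_j,c_j)$ lies in the lexicographically ordered set $\{0,\dots,\dim X\} \times \{1,\dots,B\}$ of cardinality $(\dim X+1)\cdot B$ and strictly increases along the chain, we obtain $r \leq (\dim X+1)\cdot B$, so we may set $D := (\dim X+1)\cdot B$. The step I expect to be the main obstacle is the equal-dimension case: promoting a closed $\ZZ/p$-cohomology submanifold of the same dimension as the ambient cohomology manifold to a union of connected components. For smooth manifolds this is immediate from the classical invariance of domain, but in the cohomology-manifold setting it requires carefully extracting the corresponding statement from Borel's local structure theory, which is where I expect the technical heart of the argument to lie.
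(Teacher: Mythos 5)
Your overall strategy is the same as the paper's: bound the number of connected components of fixed-point sets uniformly in $p$ via the Smith-theoretic inequality $\sum_i\dim_{\FF_p}H^i(X^{G};\FF_p)\leq\sum_i\dim_{\FF_p}H^i(X;\FF_p)$ (finite generation of $H_*(X;\ZZ)$ making the right-hand side bounded independently of $p$), and combine this with dimension properties of $\ZZ/p$-cohomology submanifolds, following the argument of \cite[Lemma 9]{M8}. However, there is a genuine gap in your monotonicity claim. The pair $(d_j,c_j)=(\dim X_j,|\pi_0(X_j)|)$ need not strictly increase, because $\dim X_j$ is only the \emph{maximum} of the dimensions of the components of $X_j$, and these components may have different dimensions (this really happens for fixed-point sets: e.g.\ the involution of $\RP^2$ induced by $\operatorname{diag}(1,1,-1)$ has fixed set a projective line plus an isolated point). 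Concretely, suppose $X_{j+1}$ has two components $Y_1,Y_2$ with $\dim Y_1=5$, $\dim Y_2=3$, and $X_j=Y_1\cup Z$ where $Z\subsetneq Y_2$ is connected of dimension $2$. Then $X_j\subsetneq X_{j+1}$ but $(d_j,c_j)=(5,2)=(d_{j+1},c_{j+1})$. In this situation your appeal to invariance of domain fails: equality of the maximal dimensions does not make $X_j$ open in $X_{j+1}$, since the component $Z$ has strictly smaller dimension than the component $Y_2$ containing it. Letting the small component grow through a chain $Z_1\subsetneq Z_2\subsetneq\dots\subsetneq Y_2$ of connected submanifolds of increasing dimension shows that $(d_j,c_j)$ can stall for several consecutive steps, so the bound $D=(\dim X+1)\cdot B$ is not established. (The step you flagged as the technical heart --- extracting invariance of domain from Borel's local theory --- is actually fine; the problem is that your invariant is too coarse to exploit it.)

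The gap is repairable by finer, component-wise bookkeeping, which is in effect what \cite[Lemma 9]{M8}, and hence the paper, does. For instance, record for each $X_j$ the vector $\bigl(a_n(X_j),\dots,a_0(X_j)\bigr)$, where $a_d$ is the number of components of dimension $d$, ordered lexicographically with higher dimensions more significant. Using the property the paper quotes --- a proper connected $\ZZ/p$-cohomology submanifold of a connected $\ZZ/p$-cohomology manifold has strictly smaller dimension, so a component of $X_j$ either equals the component of $X_{j+1}$ containing it or has strictly smaller dimension than it --- one checks that this vector strictly increases along the chain, while the Smith-theoretic bound keeps each $a_d$ at most $B$ uniformly in $p$; this yields a bound on $r$ depending only on $\dim X$ and $B$, which is the content of the lemma.
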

\begin{proof}
For any action of a finite $p$-group $G$ on a $\ZZ/p$-cohomology manifold $X$ we have
\begin{equation*}
|\pi_0(X^G)|=\dim_{\FF_p}H^0(X^{G};\FF_p)\leq \sum_j\dim_{\FF_p}H^j(X^{G};\FF_p)\leq \sum_j\dim_{\FF_p}H^j(X;\FF_p).
\end{equation*}
This follows from \cite[Theorem III.4.3]{Bo} and an easy induction on the cardinal of $G$
(see e.g. \cite[Lemma 5.1]{M7}).
The lemma is a consequence of this estimate on $|\pi_0(X^G)|$ and
the arguments in the proof of \cite[Lemma 9]{M8}, together with basic properties
of the dimension function for cohomology submanifolds (namely, that dimension is well
defined for connected $\ZZ/p$-cohomology manifolds and that it decreases when passing
from a connected $\ZZ/p$-cohomology
manifold to a proper connected $\ZZ/p$-cohomology submanifold).
\end{proof}

Given an action of a group $G$ on a set $X$ we denote for every $g\in G$
$$X^g=\{x\in X\mid g\cdot x=x\}$$
the set of points in $X$ fixed by $g$.

\begin{theorem}
\label{thm:generic-point}
Let $X$ be a topological manifold.
Suppose that $H_*(X;\ZZ)$ is finitely generated.
There exists a constant $L$
with the following property. Let $G$ be a
finite $p$-group acting on $X$. There exists a
subgroup $K\leq G$ and an element $g\in K$ satisfying $[G:K]\leq L$ and
$X^{K}=X^g$.
\end{theorem}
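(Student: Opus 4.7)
My plan is to combine Theorem \ref{thm:CMPS} on the bounded number of orbit types with a direct counting argument on the pointwise stabilizers of cyclic fixed sets. I would first apply Theorem \ref{thm:CMPS} to pass, at the cost of an index bounded by a constant depending only on $X$, to a subgroup of $G$ (still called $G$ below) whose action on $X$ has at most $C$ distinct stabilizers. Enumerate these as $S_1,\dots,S_N$ with $N\leq C$.

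For each $g\in G$, define the subgroup
\[
T(g):=\bigcap\{S_i : g\in S_i\},
\]
with the empty intersection interpreted as $G$; note that $g\in T(g)$ by construction. The key observation is that $X^{T(g)}=X^g$: the inclusion $X^{T(g)}\subseteq X^g$ follows from $g\in T(g)$, while if $x\in X^g$ then $G_x=S_i$ for some $i$ with $g\in S_i$, so $T(g)\leq S_i=G_x$ and hence every element of $T(g)$ fixes $x$. Furthermore $T(g)$ is completely determined by the subset $\{i:g\in S_i\}\subseteq\{1,\dots,N\}$, so the collection $\{T(g):g\in G\}$ contains at most $2^N\leq 2^C$ distinct subgroups.

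The proof is then concluded by a pigeonhole/averaging step. Since $g\in T(g)$ for every $g\in G$, the group $G$ is covered by these at most $2^C$ subgroups, and hence some $T_0=T(g_0)$ must satisfy $|T_0|\geq |G|/2^C$. Setting $K:=T_0$ yields $g_0\in K$, $[G:K]\leq 2^C$, and $X^K=X^{T(g_0)}=X^{g_0}$; multiplying by the index from the initial reduction produces the constant $L$ asserted in the theorem.

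The only substantive input is Theorem \ref{thm:CMPS}, which is needed to bound the number of orbit types by a constant depending only on $X$; once that is in place, the remainder of the argument is elementary combinatorics of the orbit-type stratification. The step that requires the most care is the identification $X^{T(g)}=X^g$, which encapsulates why the family $\{T(g):g\in G\}$ has cardinality independent of $|G|$ and is what drives the counting argument.
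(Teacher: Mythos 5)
Your argument is correct, and it takes a genuinely different---and more economical---route than the paper's. The paper also starts from Theorem \ref{thm:CMPS}, but then invokes Lemma \ref{lemma:descending-chain} (which rests on Smith--Borel theory for $\ZZ/p$-cohomology manifolds and forces a preliminary reduction to the boundaryless case via Lemma \ref{lemma:taking-boundary-away}) to produce, by a Noetherian-type induction, an intermediate subgroup $G'$ in which any subgroup with strictly larger fixed-point set has index greater than $CC'$; a second application of Theorem \ref{thm:CMPS} inside $G'$ then yields $K$, and the element $g$ is found by a union bound: the fewer than $C$ proper stabilizers of $K$ each have index at least $C$ in $K$, so they cannot cover $K$. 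Your key identity $X^{T(g)}=X^g$ with $T(g)=\bigcap\{S_i : g\in S_i\}$ is correct (including the degenerate case where no stabilizer contains $g$, when both sides are empty), and it lets you bypass the descending-chain machinery entirely: a single application of Theorem \ref{thm:CMPS} bounds the number of possible subgroups $T(g)$ by $2^C$, and since every $g$ lies in $T(g)$, the pigeonhole principle produces some $T(g_0)$ of index at most $2^C$ in the reduced group, so $K=T(g_0)$ and $g=g_0$ work, with the initial index at most $C'$ absorbed into $L$. Your proof is thus more elementary (no cohomological-dimension input beyond what is already packaged inside Theorem \ref{thm:CMPS}) and gives the explicit bound $L\leq C'2^{C}$, versus the paper's $(CC')^DC'$; what the paper's longer route buys is essentially only the auxiliary subgroup $G'$ with its index-gap property, which is not needed for the statement itself.
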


To avoid confusion, the reader should keep in mind that the previous theorem
does not rule out the possibility that $X^K=\emptyset$.

\begin{proof}
By Lemma \ref{lemma:taking-boundary-away} it suffices to consider the
case in which $X$ has no boundary. So we assume in the remainder of the proof that
this is the case.
By Theorem \ref{thm:CMPS} there exist numbers $C,C'$, depending only on $X$,
such that for any finite $p$-group $G$ acting on $X$
there exists a subgroup $G_s\leq G$ satisfying $|\Stab(G_s,X)|<C$ and $[G:G_s]\leq C'$.
Let $D$ be the number given by applying Lemma \ref{lemma:descending-chain} to
the manifold $X$. We claim that $L=(CC')^DC'$ satisfies the property given in the
statement.

Let $G$ be a finite $p$-group acting on $X$.
We claim that there is a subgroup $G'\leq G$ satisfying $[G:G']\leq (CC')^D$
and which has the property that any subgroup $H\leq G'$ such that
$X^{G'}\subsetneq X^H$ has index $[G':H]>CC'$. Indeed, if no such
subgroup $G'$ existed, then we could construct a sequence of subgroups
$$G=:G_0>G_1>G_2>\dots>G_D$$ satisfying
$X^{G_i}\subsetneq X^{G_{i+1}}$
and $[G_{i}:G_{i+1}]\leq CC'$ for each $i\geq 0$. This would contradict
Lemma \ref{lemma:descending-chain}, so the claim is proved.

By Theorem \ref{thm:CMPS} there exists a subgroup $K\leq G'$
satisfying $[G':K]\leq C'$ and $|\Stab(K,X)|< C$.
The first property implies that
$$[G:K]=[G:G']\cdot[G':K]\leq (CC')^DC'=L.$$
We next prove that there exists an element $g\in K$ such that $X^K=X^g$.

Let $S$ be the collection of all $H\in\Stab(K,X)$ such that
$H\neq K$. For any $H\in\Stab(K,X)$ the condition
$H\neq K$ is equivalent to $X^{K}\subsetneq X^H$.
By our choice of $K$ the set
$S$ contains at most $|\Stab(K,X)|<C$ elements.

Let
$H\in S$. There exists some $x\in X$ such that $H=K_x$
because $H\in \Stab(K,X)$. We have
$K_x=G_x'\cap K$, so $G_x'\neq G'$ (for otherwise $H=K_x$
would be equal to $K$). Hence $G_x'\in\Stab(G',X)$ satisfies
$X^{G'}\subsetneq X^{G_x'}$ and so, by the choice of $G'$, we
have $[G':G_x']\geq CC'$. The bound $[G':K]\leq C'$ implies that
$|K|\geq |G'|/C'$, so
$$[K:H]=[K:K_x]=[K:G'_x\cap K]=\frac{|K|}{|G'_x\cap K|}\geq \frac{|G'|/C'}{|G'_x|}=\frac{[G':G'_x]}{C'}
\geq \frac{CC'}{C'}=C.$$
Hence for every $H\in S$ we have $|H|\leq |K|/C$

We have
$$\left|\bigcup_{H\in S}H\right|\leq \sum_{H\in S}|H|\leq |S|\frac{|K|}{C}\leq |\Stab(K,X)|\frac{|K|}{C}<|K|.$$
So we may take some element $g\in K$ that does not belong to any
$H\in S$. Certainly $X^{K}\subseteq X^g$. If the inclusion were strict,
then we could take some $x\in X^g\setminus X^{K}$.
Then $K_x$ would belong to $S$ and would contain $g$, which is a
contradiction. Consequently, $X^{K}=X^g$.
\end{proof}

\begin{theorem}
\label{thm:fixed-point-abelian-group-action-weak-fixed-point}
Let $X$ be a topological manifold. If $H_*(X;\ZZ)$ is finitely generated then
almost every finite abelian group action on $X$ with the weak fixed point property
has a fixed point.
\end{theorem}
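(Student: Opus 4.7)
The plan is to reduce to Theorem~\ref{thm:generic-point} one Sylow subgroup at a time and then glue the chosen elements into a single element of $G$ via the Chinese Remainder Theorem. Let $L$ be the constant produced by Theorem~\ref{thm:generic-point} applied to $X$, and let $N$ denote the number of primes at most $L$. Given a finite abelian group $G$ acting on $X$ with the weak fixed point property, I would first write $G=\prod_p G_p$ as the internal direct product of its Sylow subgroups. For each prime $p$ dividing $|G|$, Theorem~\ref{thm:generic-point} applied to the induced action of $G_p$ on $X$ yields a subgroup $K_p\leq G_p$ with $[G_p:K_p]\leq L$ and an element $g_p\in K_p$ satisfying $X^{K_p}=X^{g_p}$.

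Next, I would set $K=\prod_p K_p\leq G$ and $g=\prod_p g_p\in G$, where $g_p$ is taken to be the identity whenever $p\nmid|G|$. Since the $g_p$ pairwise commute and have pairwise coprime orders (each $|g_p|$ is a power of $p$), the Chinese Remainder Theorem gives $\langle g\rangle=\prod_p\langle g_p\rangle$, so in particular each $g_p$ lies in $\langle g\rangle$. Therefore
$$X^g=X^{\langle g\rangle}=\bigcap_p X^{g_p}=\bigcap_p X^{K_p}=X^K.$$
By the weak fixed point property applied to $g\in G$, the left-hand side is nonempty, so $X^K\neq\emptyset$.

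The step requiring care is the index bound, but it is resolved by a short arithmetic observation: since $[G_p:K_p]$ divides $|G_p|$, it is a power of $p$, so the bound $[G_p:K_p]\leq L$ forces $K_p=G_p$ whenever $p>L$. Only the finitely many primes $p\leq L$ can contribute, giving
$$[G:K]=\prod_{p\leq L}[G_p:K_p]\leq L^{N},$$
a constant depending only on $X$. I do not foresee any serious obstacle: the argument runs entirely on Theorem~\ref{thm:generic-point} combined with the Chinese Remainder Theorem, with this last arithmetic observation being the one trick needed to make the uniform index bound work.
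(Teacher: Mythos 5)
Your proposal is correct and follows essentially the same route as the paper: decompose the abelian group into its Sylow $p$-parts, apply Theorem~\ref{thm:generic-point} to each part to get $K_p$ and $g_p$ with $X^{K_p}=X^{g_p}$, observe that $K_p=G_p$ for $p>L$ to obtain a uniform index bound, and use the Chinese Remainder Theorem together with the weak fixed point property applied to $g=\prod_p g_p$ to conclude $X^K\neq\emptyset$. The only cosmetic differences are that the paper bounds the number of relevant primes by $L$ (giving $L^L$ instead of your $L^N$) and only needs the inclusion $X^g\subseteq X^K$ rather than the equality you note.
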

\begin{proof}
Let $L$ be the number given by applying Theorem \ref{thm:generic-point} to $X$.
We are going to prove that for any action of a finite abelian group $A$ on $X$ with the
weak fixed point property there is
a subgroup $A'\leq A$ satisfying $[A:A']\leq L^L$ and $X^{A'}\neq\emptyset$.

Let us fix a finite abelian group $A$ and suppose that $A$ acts on $X$ with the weak
fixed point property. We have $A=\prod_p A_p$, where the product runs over the
set of primes $p$ and $A_p$ is the $p$-part of $A$.
In other words, $A_p$ is the Sylow $p$-subgroup of $A$ or, equivalently (since $A$ is abelian),
the group consisting of elements whose order is a power of $p$.
By Theorem \ref{thm:generic-point} for any $p$ there is a subgroup $A_p'\leq A_p$ and an element
$a_p\in A_p'$ such that $X^{a_p}=X^{A_p'}$, and furthermore $[A_p:A_p']\leq L$.
The latter implies that $A_p'=A_p$ for every $p>L$, since for such values of $p$ no finite
$p$-group can contain a proper subgroup of index not greater than $L$.
Since the number of primes in $\{1,\dots,L\}$ is obviously at most $L$, the group
$A'=\prod_pA_p'$ satisfies $[A:A']\leq L^L$.

Let $a=\prod_p a_p\in A'$.
Since the action of $A$ on $X$ has the weak fixed point property,
$X^a\neq\emptyset$.
Since the order of each $a_p$ is a power of $p$, the Chinese remainder theorem implies
that for each $p$ there is some $e$ such that $a^e=a_p$. This implies
that $X^a\subseteq X^{a_p}=X^{A_p'}$ for each prime $p$. Consequently
$$\emptyset\neq X^a\subseteq \bigcap_p X^{A_p'}=X^{A'},$$
so the proof of the theorem is now complete.
\end{proof}

\begin{theorem}
\label{thm:fixed-point-abelian-group-action}
Let $X$ be a compact connected manifold
with nonzero Euler characteristic.
Almost every finite abelian group action on $X$ has a fixed point.
\end{theorem}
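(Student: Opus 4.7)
The plan is to reduce the statement to Theorem \ref{thm:fixed-point-abelian-group-action-weak-fixed-point}, which already handles abelian actions possessing the weak fixed point property. It therefore suffices to show that almost every finite abelian group action on $X$ has the weak fixed point property, and then to concatenate the two bounded-index passages.

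To establish the weak fixed point property after passing to a subgroup of controlled index, I would invoke the Lefschetz fixed point theorem. Since $X$ is compact, $H_*(X;\ZZ)$ is finitely generated, so Lemma \ref{lemma:trivial-homology} yields a constant $C_1$ and, for any finite abelian group $A$ acting on $X$, a subgroup $A_0\leq A$ with $[A:A_0]\leq C_1$ whose induced action on $H_*(X;\ZZ)$ is trivial, hence trivial as well on $H_*(X;\QQ)$. For every $a\in A_0$ the Lefschetz number then evaluates to $\sum_{i\geq 0}(-1)^i\dim_{\QQ}H_i(X;\QQ)=\chi(X)$, which is nonzero by hypothesis. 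Because a compact topological manifold (with or without boundary) is a compact metric ANR, the Lefschetz fixed point theorem applies and gives $X^a\neq\emptyset$. Consequently the action of $A_0$ on $X$ has the weak fixed point property.

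Applying Theorem \ref{thm:fixed-point-abelian-group-action-weak-fixed-point} to $A_0$ now produces a subgroup $A_1\leq A_0$ of index bounded by a constant $C_2=C_2(X)$ with $X^{A_1}\neq\emptyset$; hence $[A:A_1]\leq C_1C_2$, which is the uniform bound the theorem demands.

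I do not foresee any serious obstacle: the strategy merely assembles tools that are already in place. The one point worth checking is the applicability of the Lefschetz theorem to a compact topological manifold that may have boundary, which rests on the classical fact that such manifolds are compact metric ANRs. A minor bookkeeping remark is that in this paper the Euler characteristic is defined via Alexander--Spanier cohomology, but for compact manifolds this agrees with singular cohomology, so the evaluation of the Lefschetz number is unambiguous.
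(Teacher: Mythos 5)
Your proposal is correct and follows essentially the same route as the paper: pass to a subgroup acting trivially on homology via Lemma \ref{lemma:trivial-homology}, use the Lefschetz fixed point theorem to get the weak fixed point property from $\chi(X)\neq 0$, and then apply Theorem \ref{thm:fixed-point-abelian-group-action-weak-fixed-point}, multiplying the two index bounds. The extra remarks about compact manifolds being ANRs and the agreement of Alexander--Spanier and singular cohomology are sound and only make explicit what the paper leaves to the citation of Lefschetz's formula.
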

\begin{proof}
By Lemma \ref{lemma:trivial-homology} there exists a natural number $H$,
depending only on $X$, such
that for any action of a finite group $G$ on $X$ there is a subgroup
$G'\leq G$ whose action on $H_*(X;\ZZ)$ is trivial and which satisfies
$[G:G']\leq H$. Since $X$ is compact, $H_*(X;\ZZ)$ is finitely generated.
Let $H'$ be the constant obtained by applying
Theorem \ref{thm:fixed-point-abelian-group-action-weak-fixed-point}
to $X$. We are going to prove that for any action of a finite abelian group $A$
on $X$ there is a subgroup $A''\leq A$ satisfying $X^{A''}\neq\emptyset$
and $[A:A'']\leq HH'$.

Let $A$ be a finite abelian group acting on $X$.
There is a subgroup $A'\leq A$ whose
action on $H_*(X;\ZZ)$ is trivial and which satisfies $[A:A']\leq H$.
Since $X$ is compact and has nonzero Euler characteristic,
Lefschetz's formula \cite[Exercise 6.17.3]{tD} implies that
the action of $A'$ on $X$ has the weak fixed point property.
Applying Theorem \ref{thm:fixed-point-abelian-group-action-weak-fixed-point}
to the action of $A'$ on $X$ we conclude that there is a subgroup
$A''\leq A'$ satisfying $X^{A''}\neq\emptyset$
and $[A':A'']\leq H'$.
Now, $[A:A'']\leq HH'$.
\end{proof}

\section{Linearising actions of finite $p$-groups at fixed points}

\label{s:DH}

It is well known that if a compact Lie group $G$ acts smoothly and
effectively on a connected smooth
$n$-dimensional manifold $X$ and $x\in X^G$ is a fixed point, then
the linearization of the action provides an effective linear action of $G$ on $T_xX$.
This implies that $G$ is isomorphic to a subgroup of $\GL(n,\RR)$.

The last property is false for continuous actions. Indeed,
Zimmermann has given in \cite{Z3} an example, for each natural
number $n>5$, of a finite group
which acts effectively on $S^n$ and yet is not isomorphic
to any subgroup of $\GL(n+1,\RR)$. Any such action induces an
effective action on the
cone $CS^n=(S^n\times [0,\infty))/(S^n\times\{0\})\cong\RR^{n+1}$
fixing the point that arises from collapsing
$S^n\times\{0\}$ (i.e., the vertex of the cone).

In this section we prove two main results. The first one, Corollary \ref{cor:DH-local-manifold-with-boundary},
states that if we restrict to actions of finite $p$-groups then the previous pathology
cannot take place. The second one, Theorem \ref{thm:DH-sphere},
states that any finite $p$-group acting
continuously on an $n$-dimensional manifold whose integral homology
is isomorphic to $H_*(S^n;\ZZ)$ is isomorphic to a subgroup of
$\GL(n+1,\RR)$ (in contrast to the case of arbitrary finite group actions,
again by Zimmermann \cite{Z3}).

The proofs of these results follow in a
straightforward way from the ideas in a paper of Dotzel and Hamrick \cite{DH}
and from standard results on continuous actions
of finite $p$-groups on topological manifolds mainly due to Smith and Borel.

The following lemma is a consequence of the arguments in \cite[Section 1]{DH} (see also
\cite[Chap III, Theorem 5.13]{tD}).

\begin{lemma}
\label{lemma:DH}
Let $p$ be a prime and $G$ be a finite $p$-group.
Suppose given a function assigning a nonnegative integer
$n(H)$ to each subgroup $H\leq G$, satisfying these
properties:
\begin{enumerate}
\item For any two subgroups $K\lhd K'\leq G$ such that
$K'/K$ is elementary abelian of rank $2$ the following holds:
$$n(K)-n(K')=\sum_H (n(H)-n(K')),$$
where the sum is over the subgroups $H\leq K'$
satisfying $K\lhd H$ and $H/K\simeq\ZZ/p$;
\item For any two subgroups $K\leq K'\leq G$, we have
$n(K')\leq n(K)$, and if $p$ is odd then $n(K)-n(K')$ is even.
\item For any subgroup $K\leq G$ and any $g\in G$ we have
$n(K)=n(gKg^{-1})$.
\item For any three subgroups $K\lhd K'\lhd K''\leq G$ such that
$K'/K\simeq\ZZ/2$ and $K''/K$ is generalized quaternion
(resp. cyclic of order $4$), $n(K)-n(K')$ is divisible by $4$
(resp. $n(K)-n(K')$ is even).
\end{enumerate}
If $n(K)<n:=n(\{1\})$ for every subgroup $K\leq G$ different from $\{1\}$,
then $G$ is isomorphic to a subgroup of $\GL(n,\RR)$.
\end{lemma}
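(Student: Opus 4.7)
The strategy is to realize $n(\cdot)$ as the fixed-point dimension function of a genuine real representation $V$ of $G$ with $\dim V = n$, and then to deduce that $V$ is faithful from the hypothesis that $n(K) < n$ for every nontrivial $K \leq G$. This will yield the desired embedding $G \hookrightarrow \GL(V) \cong \GL(n,\RR)$. Apart from the faithfulness step, the proof is essentially an adaptation of the techniques of Dotzel--Hamrick \cite{DH}.

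First I would use condition (3) together with Artin's induction theorem to produce a unique virtual real representation $\rho \in R_\RR(G) \otimes \QQ$ with $\dim \rho^H = n(H)$ for every subgroup $H \leq G$: since the permutation characters $\operatorname{Ind}_C^G \mathbf{1}_C$ over conjugacy classes of cyclic subgroups $C$ span $R_\RR(G)\otimes\QQ$, and on each such spanning element the functional $[V] \mapsto \dim V^H$ is readily computed, the data $n(\cdot)$ determines $\rho$ uniquely. The real task is then to promote $\rho$ from virtual to genuine, i.e.\ to show that the multiplicity of every real irreducible character of $G$ in $\rho$ is a nonnegative integer.

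This promotion is the main obstacle, and it is precisely the content of the work of Dotzel and Hamrick: conditions (1), (2), (4) are the Borel--Smith-type conditions characterizing fixed-dimension functions of real representations of finite $p$-groups. Condition (1) is the rank-two Borel--Smith identity across elementary abelian sections; condition (2) combines monotonicity with the evenness of $n(K)-n(K')$ for odd $p$; and condition (4) encodes the delicate cyclic-of-order-$4$ and generalized quaternion phenomena that only arise for $p=2$. I would follow \cite[Section 1]{DH} essentially verbatim, inducting on $|G|$ and rewriting each irreducible multiplicity via Frobenius reciprocity as a nonnegative integer combination of the differences $n(K) - n(K')$ that conditions (1), (2), (4) directly control.

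Once $V$ has been obtained, let $K \leq G$ be the kernel of the action $G \to \GL(V)$. Then $V^K = V$, so $n(K) = \dim V^K = \dim V = n(\{1\}) = n$; the hypothesis $n(K) < n$ for every nontrivial $K$ therefore forces $K = \{1\}$, giving the required injection $G \hookrightarrow \GL(n, \RR)$.
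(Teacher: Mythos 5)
Your proposal takes essentially the same route as the paper: both invoke the Dotzel--Hamrick realization theorem (equivalently \cite[Chap III, Theorem 5.13]{tD}) that a Borel--Smith function on a finite $p$-group is the fixed-point dimension function of a genuine real representation $V$ with $\dim V=n(\{1\})$, and then deduce faithfulness from the hypothesis $n(K)<n$ for $K\neq\{1\}$ (the paper argues elementwise via $\Ker(\rho(g)-\Id)=V^{\la g\ra}$, which is equivalent to your kernel argument). The only caveat is your parenthetical claim that the dimension function determines the virtual representation uniquely, which fails in general (the two faithful two-dimensional real representations of $\ZZ/5$ have the same dimension function), but this aside is inessential since the existence statement you need is exactly what the cited Dotzel--Hamrick argument supplies.
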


Conditions (1)---(4) above are usually called Borel--Smith conditions.
The result proved in \cite[Section 1]{DH} and in
\cite[Chap III, Theorem 5.13]{tD} states that given any function
$G\geq H\mapsto n(H)\geq 0$ satisfying the Borel--Smith conditions there
is a real representation $\rho:G\to\GL(V)$ with the property that
$\dim V^H=n(H)$ for every $H\leq G$. This implies in particular that $\dim V=\dim V^{\{1\}}=n:=n(1)$
and that, if $n(K)<n$ for every subgroup $K\leq G$ different from $\{1\}$,
then the action of $G$ on $V$ is effective. Indeed, for any $g\in G$
we have $\Ker(\rho(g)-\Id)=V^{\la g\ra}$, where $\la g\ra\leq G$ denotes
the subgroup generated by $g$, and if $g\neq 1$ then $\dim V^{\la g\ra}<\dim V$,
so $\rho(g)\neq 1$.

\begin{theorem}
\label{thm:DH-local}
Let $p$ be a prime, let $G$ be a finite $p$-group, and suppose that
$G$ acts effectively on an $n$-dimensional connected
$\ZZ/p$-cohomology manifold $X$ without boundary.
If $X^G\neq\emptyset$ then $G$ is isomorphic to
a subgroup of $\GL(n,\RR)$.
\end{theorem}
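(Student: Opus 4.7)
The plan is to apply Lemma \ref{lemma:DH} to the function $n(H):=\dim X^H_x$, where $x\in X^G$ is a fixed point chosen once and for all, and $X^H_x$ denotes the connected component of the fixed point set $X^H$ that contains $x$. By Smith theory applied to the $p$-group $H\leq G$ acting on the $\ZZ/p$-cohomology manifold $X$, the set $X^H$ is itself a $\ZZ/p$-cohomology manifold (see \cite[Chap.~I]{Bo}), so $n(H)$ is a well-defined non-negative integer, and clearly $n(\{1\})=\dim X=n$.

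First I would verify the strict inequality $n(K)<n$ for every nontrivial subgroup $K\leq G$. If instead $n(K)=n$ for some such $K$, then $X^K_x$ would be an $n$-dimensional connected $\ZZ/p$-cohomology submanifold of the $n$-dimensional connected $\ZZ/p$-cohomology manifold $X$; by the basic properties of the dimension function for cohomology submanifolds this forces $X^K_x$ to be open in $X$. Since $X^K_x$ is also closed in $X$ (being a connected component of the closed set $X^K$) and $X$ is connected, this would give $X^K_x=X$, so $K$ would act trivially on $X$, contradicting the effectiveness of $G$.

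Next I would verify the Borel--Smith conditions (1)--(4) of Lemma \ref{lemma:DH}. Condition (3) is immediate: for $g\in G$, the homeomorphism $y\mapsto g\cdot y$ fixes $x$ and sends $X^H$ to $X^{gHg^{-1}}$, hence carries $X^H_x$ homeomorphically onto $X^{gHg^{-1}}_x$. For condition (2), any connected subset of $X^K$ containing $x$ lies inside $X^K_x$, so $X^{K'}_x\subseteq X^K_x$ whenever $K\leq K'$, yielding $n(K')\leq n(K)$; the parity claim for odd $p$ and the divisibility claims in condition (4) are the classical Smith parity relations for components of fixed-point sets of cyclic $p$-power actions on $\ZZ/p$-cohomology manifolds. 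Condition (1) is the Borel formula for a rank-two elementary abelian $p$-group, applied to the induced action of $K'/K$ on the connected $\ZZ/p$-cohomology manifold $X^K_x$, with dimensions read off at the common fixed point $x$. All of these relations are part of the standard Borel--Smith theory, in the form developed in \cite{Bo} and summarized in \cite[Chap.~III]{tD}.

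Once the four conditions are in place, Lemma \ref{lemma:DH} produces a real representation $\rho\colon G\to\GL(n,\RR)$, and the strict inequality $n(K)<n$ for $K\neq\{1\}$ forces $\rho$ to be faithful, giving the desired embedding $G\hookrightarrow\GL(n,\RR)$. The main conceptual point, and the only genuine obstacle, is confirming that the Borel--Smith conditions survive in the purely topological framework of $\ZZ/p$-cohomology manifolds without boundary; once this is granted, the application of Lemma \ref{lemma:DH} is mechanical.
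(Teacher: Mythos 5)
Your proposal is correct and follows essentially the same route as the paper: define $n(H)$ as the (local) dimension at a common fixed point $x\in X^G$ of $X^H$, check the Borel--Smith conditions using Smith--Borel theory for $\ZZ/p$-cohomology manifolds (Borel's seminar and tom~Dieck, Chap.~III), and invoke Lemma \ref{lemma:DH} together with the strict inequality $n(K)<n$ for $K\neq\{1\}$ to get a faithful representation. Your explicit open-and-closed argument for that strict inequality is just a spelled-out version of the step the paper states directly, so there is no substantive difference.
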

\begin{proof}
Take any $x\in X^G$. For any $H\leq G$ the fixed point set $X^H$ is a
$\ZZ/p$-cohomology submanifold of $X$ containing $x$ (see \cite[Chap I]{Bo} for the
definition of $\ZZ/p$-cohomology subma\-nifold and \cite[Chap V, Theorem 2.2]{Bo} for the proof
of the statement on $X^H$).
Define $n(H):=\dim_x X^H$.
We next explain why the function $G\geq H\mapsto n(H)$ satisfies the Borel--Smith
conditions.

Condition (1) is proved in \cite[Chap XIII, Theorem 4.3]{Bo}.
The first part of condition (2) is a consequence of the fact, previously
used in the proof of Lemma \ref{lemma:descending-chain} above, that the dimension
of cohomology manifolds is nonincreasing when passing to a submanifold (this follows
from the general properties of dimension of cohomology manifolds, see \cite[Chap I]{Bo}).
The second part of condition (2) is proved in \cite[Chap V, Theorem 2.3, (a)]{Bo}.
Condition (3) follows from $X^{gKg^{-1}}=gX^K$ and the homeomorphism invariance of
dimension.
Finally, condition (4) follows from the same arguments as Propositions (4.31) and (4.32)
in \cite[Chap III]{tD}.

Since the action of $G$ on $X$ is effective and $X$ is connected, we have
$n(K)<n$ for every subgroup $K\leq G$ different from $\{1\}$.
Consequently, the theorem follows from Lemma \ref{lemma:DH}.
\end{proof}

\begin{corollary}
\label{cor:DH-local-manifold-with-boundary}
Let $G$ be a finite $p$-group, and suppose that
$G$ acts effectively on an $n$-dimensional connected
topological manifold $X$. If $X^G\neq\emptyset$ then $G$ is isomorphic to
a subgroup of $\GL(n,\RR)$.
\end{corollary}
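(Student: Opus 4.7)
The plan is a short reduction from the manifold-with-boundary setting to the setting of Theorem \ref{thm:DH-local}, which already handles connected $n$-dimensional $\ZZ/p$-cohomology manifolds without boundary. The only piece of input needed beyond that theorem is Lemma \ref{lemma:taking-boundary-away}, which lets us enlarge $X$ to a boundaryless manifold of the same dimension while keeping the group action.

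First I would invoke Lemma \ref{lemma:taking-boundary-away} to produce an $n$-dimensional topological manifold $X'$ without boundary, together with an embedding $X\hookrightarrow X'$ that is a homotopy equivalence, and an extension of the given $G$-action on $X$ to a continuous $G$-action on $X'$. Next I would check that the triple $(G,X',\text{action})$ satisfies the hypotheses of Theorem \ref{thm:DH-local}: the manifold $X'$ is connected, since a homotopy equivalence is a bijection on $\pi_0$ and $X$ is connected; the extended action is still effective, because every nontrivial $g\in G$ already acts nontrivially on the subspace $X\subseteq X'$; and $(X')^G\supseteq X^G\neq\emptyset$, since any $G$-fixed point of $X$ remains fixed by the extended action. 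Finally, since every topological $n$-manifold without boundary is an $n$-dimensional $\ZZ/p$-cohomology manifold (the local cohomology $H^*(X',X'\setminus\{x\};\ZZ/p)$ agrees with that of $(\RR^n,\RR^n\setminus\{0\})$), Theorem \ref{thm:DH-local} applies to the $G$-action on $X'$ and produces the desired embedding $G\hookrightarrow\GL(n,\RR)$.

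There is essentially no obstacle to overcome here; the corollary is a purely formal consequence of Theorem \ref{thm:DH-local} and Lemma \ref{lemma:taking-boundary-away}. The only point worth writing down carefully is that the extension of the action furnished by Lemma \ref{lemma:taking-boundary-away}, given on the added collar $\partial X\times[0,1)$ by $g\cdot(x,t)=(g\cdot x,t)$, manifestly preserves effectiveness and takes $X$-fixed points to $X'$-fixed points, so the hypotheses transfer transparently.
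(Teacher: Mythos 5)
Your proposal is correct and follows essentially the same route as the paper: pass to the boundaryless manifold $X'$ of Lemma \ref{lemma:taking-boundary-away}, note that connectedness, effectiveness and the nonempty fixed-point set transfer to $X'$, and apply Theorem \ref{thm:DH-local} to $X'$ viewed as a $\ZZ/p$-cohomology manifold. Your explicit checks (bijection on $\pi_0$, effectiveness via $X\subseteq X'$, and the local cohomology remark) are exactly the points the paper uses, stated slightly more carefully.
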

\begin{proof}
If $X$ has no boundary, then Theorem \ref{thm:DH-local} applies to $X$ for every prime $p$,
so there is nothing to be proved. Suppose that $\partial X\neq\emptyset$.
Let $f:X\hookrightarrow X'$ be the embedding given by applying Lemma \ref{lemma:taking-boundary-away}
to $X$. Then $X'$ is an $n$-dimensional topological manifold.
Since $f$ is a homotopy equivalence and $X$ is connected, $X'$ is also connected.
In addition, for every prime $p$, $X'$ is a $\ZZ/p$-cohomology  manifold without boundary.
Let $G$ be a finite $p$-group acting effectively on $X$, and suppose that $X^G\neq\emptyset$.
Consider the extension of this action to $X'$ given by Lemma \ref{lemma:taking-boundary-away}.
Then $(X')^G\neq\emptyset$. Applying Theorem \ref{thm:DH-local} to the action of $G$ on $X'$
we conclude that $G$ is isomorphic to a subgroup of $\GL(n,\RR)$.
\end{proof}

\begin{theorem}
\label{thm:DH-sphere}
Let $p$ be a prime, let $G$ be a finite $p$-group, and suppose that
$G$ acts effectively on an $n$-dimensional $\ZZ/p$-homology sphere.
Then $G$ is isomorphic to a subgroup of $\GL(n,\RR)$.
\end{theorem}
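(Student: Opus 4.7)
The plan is to apply the Dotzel--Hamrick criterion (Lemma~\ref{lemma:DH}) to construct a faithful real representation of $G$ whose dimension can be arranged to be exactly $n$. The input from Smith theory is that for every subgroup $H\leq G$ the fixed set $X^H$ is either empty or a $\ZZ/p$-cohomology sphere of some dimension $d(H)\in\{0,1,\dots,n\}$ (see \cite[Chap.~V]{Bo}); moreover $d$ is monotone (if $K\leq H$ then $d(H)\leq d(K)$) and, by effectiveness together with connectedness of $X$, $d(H)<n$ for every nontrivial $H\leq G$.

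I would first dispatch the case $X^G\neq\emptyset$. Since a $\ZZ/p$-homology sphere is in particular a $\ZZ/p$-cohomology manifold without boundary, picking any $x\in X^G$ and setting $n(H):=\dim_x X^H$ allows the argument of Theorem~\ref{thm:DH-local} to be transplanted verbatim: the Borel--Smith conditions are verified by the same appeals to \cite{Bo} and \cite{tD}, the estimate $n(H)<n$ for $H\neq\{1\}$ follows from effectiveness, and Lemma~\ref{lemma:DH} yields $G\hookrightarrow\GL(n,\RR)$ directly.

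The substantive case is $X^G=\emptyset$, in which no global linearisation point is available. Here I would work with global dimensions, setting $n(H):=d(H)+1$ with the convention $d(\emptyset):=-1$, so that $n(H)\in\{0,\dots,n+1\}$ and $n(\{1\})=n+1$. The Smith--Borel identity
\[
d(K)-d(K')=\sum_{\substack{K\lhd H\leq K'\\ |H/K|=p}}\bigl(d(H)-d(K')\bigr)
\]
for $K\lhd K'$ with $K'/K\simeq(\ZZ/p)^2$ (cf.~\cite[Chap.~XIII, Thm.~4.3]{Bo}), together with the parity statements of \cite[Chap.~V]{Bo} already invoked in the proof of Theorem~\ref{thm:DH-local}, is tailored to show that $H\mapsto n(H)$ satisfies the Borel--Smith conditions. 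Lemma~\ref{lemma:DH} then provides a faithful real $G$-representation $V$ with $\dim V=n+1$.

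The main obstacle is to descend from $\dim V=n+1$ to $\dim V=n$ in this no-fixed-point case. The strategy I would pursue is to exhibit a $G$-invariant line in $V$ that can be split off, leaving a faithful $n$-dimensional subrepresentation. To produce such a line, I would combine Smith theory with structural results on fixed-point-free $p$-group actions on $\ZZ/p$-homology spheres: either some proper subgroup $H<G$ has $X^H\neq\emptyset$, in which case Theorem~\ref{thm:DH-local} applied to the $\ZZ/p$-cohomology sphere $X^H$ feeds back extra Borel--Smith constraints on $n(\cdot)$ that force a trivial summand in $V$; or else $G$ acts freely on $X$, in which case Milnor-type restrictions leave only cyclic groups (for $p$ odd) and cyclic or generalised quaternion groups (for $p=2$), for each of which a faithful $n$-dimensional real representation can be written down by inspection. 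Verifying these structural inputs under the weaker hypothesis of a $\ZZ/p$-homology sphere, rather than an actual sphere, is the technically delicate step on which the proof will hinge.
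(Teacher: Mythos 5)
Your first two steps reproduce, in different clothing, what the paper actually does: its proof simply forms the cone $CX=X\times[0,\infty)/(X\times\{0\})$, which is an $(n+1)$-dimensional $\ZZ/p$-cohomology manifold without boundary on which $G$ acts effectively with the vertex as a fixed point, and applies Theorem \ref{thm:DH-local} there. Your function $n(H)=d(H)+1$ is precisely $H\mapsto\dim_v (CX)^H$ at the vertex $v$, so your verification of the Borel--Smith conditions is the cone argument written out by hand. Both routes land at a faithful real representation of dimension $n+1$, i.e.\ at an embedding $G\hookrightarrow\GL(n+1,\RR)$ --- which is what the introduction announces for this theorem and what the application to Theorem \ref{thm:homology-sphere} (the bound $[(n+1)/2]$ on the number of generators) actually uses; the ``$\GL(n,\RR)$'' in the printed statement appears to be a slip.

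The genuine gap is your final descent from $n+1$ to $n$: it cannot be carried out, because the statement with $\GL(n,\RR)$ is false in the fixed-point-free case. For $p$ odd, $\ZZ/p$ acts freely on $S^1$, but the only nontrivial finite subgroup of $\GL(1,\RR)$ is $\ZZ/2$. For $p=2$, the quaternion group $Q_8$ acts freely on $S^3$, yet every faithful real representation of $Q_8$ must contain its $4$-dimensional irreducible summand (all one-dimensional characters kill the centre), so $Q_8$ does not embed in $\GL(3,\RR)$. In particular your claim that the cyclic and generalised quaternion groups arising from free actions ``admit a faithful $n$-dimensional real representation by inspection'' is exactly where the argument breaks, and there is in general no invariant line to split off from the Dotzel--Hamrick representation. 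The correct move is to stop at $\GL(n+1,\RR)$, which is all the rest of the paper needs; your treatment of the case $X^G\neq\emptyset$, where Theorem \ref{thm:DH-local} does give $\GL(n,\RR)$ directly, is fine.
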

\begin{proof}
It follows from applying Theorem \ref{thm:DH-local} to
the induced action on the cone $CX=X\times [0,\infty)/(X\times\{0\})$,
which is a $\ZZ/p$-homology manifold. Alternatively, one may use results
for actions of finite $p$-groups on $\ZZ/p$-homology spheres analogous
to the results on actions with fixed points that we used in the
proof of Theorem \ref{thm:DH-local} (see \cite{Bo,tD}).
\end{proof}

\section{Proofs of the theorems}
\label{s:proof}

\subsection{Proof of Theorem \ref{thm:low-dimension}: manifolds of dimension at most $3$}
\label{ss:low-dimension}

If $X$ is a compact manifold with nonempty boundary we denote by $X^{\sharp}$ the double
of $X$, resulting from taking the disjoint union of two copies of $X$ and identifying the
boundary of the first copy with the boundary of the second copy using the tautological
identification between them.
Then $X^{\sharp}$ is a closed
manifold of the same dimension as $X$, and any group acting effectively on $X$ also acts effectively
on $X^{\sharp}$. Consequently, to prove Theorem \ref{thm:low-dimension} it suffices to consider
closed manifolds.

Assume that
$X$ is a closed topological manifold of dimension $3$. By Moise's theorem \cite{Moise}
(see also \cite{Bing2}), $X$ has a unique smooth structure (actually both Moise and Bing refer
to triangulations in their papers; for the existence of a unique smooth structure on triangulated
manifolds of dimensions at most $3$ see \cite[Section 3.10]{Thurston}).
By a recent result of Pardon \cite{Pardon2019}
any finite group acting effectively and topologically on $X$ admits effective smooth actions on $X$ (although, of course, not any topological action is conjugate to a smooth action, as illustrated by the famous example due to Bing \cite{Bing}).
The previous facts also hold true in dimensions $1$ and $2$ with substantially simpler proofs.
In dimension $1$ they are an easy exercise.
See \cite{Rado} for the $2$-dimensional version of Moise's theorem (Rad\'o's theorem), and
\cite[pp. 340--341]{Edmonds} and the references therein for the other statements.

Since the diffeomorphism group $\Diff(X)$ is known to be Jordan if $X$ is closed and $\dim X\leq 3$
(see \cite{M1} for the case $\dim X\leq 2$ and
Zimmermann \cite{Z2} for the case $\dim X=3$), it follows that $\Homeo(X)$ is Jordan as well,
so Theorem \ref{thm:low-dimension} is proved.

\subsection{Proof of Theorem
\ref{thm:nonzero-Euler-Jordan}}
\label{ss:proof-thm:nonzero-Euler-Jordan}
Let $X$ be a connected $n$-dimensional topological manifold
with finitely generated integral homology and nonzero Euler characteristic.

We first prove that almost every finite group acting effectively on $X$ is abelian.
By Corollary \ref{cor:CPS}, it suffices to prove that
almost every finite $p$-group acting effectively on $X$ is abelian.
By Lemma \ref{lemma:p-group-nonzero-Euler-characteristic}
almost every finite $p$-group action on $X$ has a fixed point.
Let $G$ be a finite $p$-group acting effectively on $X$ and suppose that
$X^G\neq\emptyset$. By Corollary \ref{cor:DH-local-manifold-with-boundary}, $G$ is isomorphic
to a subgroup of $\GL(n,\RR)$, and by Theorem \ref{thm:Jordan}
$G$ has an abelian subgroup $G'\leq G$ with $[G:G']$ bounded
above by a constant depending only on $n$. This concludes the proof
that almost every finite group acting effectively on $X$ is abelian.

We next prove that almost every abelian group acting effectively on $X$
can be generated by $[n/2]$ or fewer elements. For any finite abelian
group $A$ let $\rk A$ denote the size of the smallest generating subsets
of $A$. For any prime $p$ denote by $A_p$ the $p$-part of $A$. We
have $\rk A=\max_p\rk A_p$, where the maximum is taken over the set
of all primes $p$. So to prove that
almost every abelian group acting effectively on $X$
can be generated by $[n/2]$ or fewer elements,
it suffices to prove that
almost every abelian $p$-group acting effectively on $X$
can be generated by $[n/2]$ or fewer elements.
The arguments used above to prove the Jordan property for $\Homeo(X)$
imply that it suffices to prove that almost every finite abelian $p$-subgroup
of $\GL(n,\RR)$ can be generated by $[n/2]$ or fewer elements, which
is an easy and well known fact.

\subsection{Proof of Theorem \ref{thm:homology-sphere}}
The proof is almost identical to that of Theorem \ref{thm:nonzero-Euler-Jordan},
except that to prove that almost every finite $p$-group acting on $X$ is abelian
we use Theorems \ref{thm:DH-sphere} and \ref{thm:Jordan}.

\subsection{Proof of Theorem \ref{thm:compact-nonzero-Euler}}
Let $X$ be a compact and connected $n$-dimensional topological manifold
with nonzero Euler characteristic.
It clearly suffices to consider effective group actions.
By Theorem \ref{thm:nonzero-Euler-Jordan}, almost every finite group acting effectively
on $X$ is abelian. Consequently, we only need to prove that almost every
finite abelian group acting effectively on $X$ has a fixed point. This is
precisely the statement of Theorem \ref{thm:fixed-point-abelian-group-action}.

\subsection{Proof of Theorem \ref{thm:from-weak-to-almost-fixed-point}}
Let $X$ be a connected topological manifold with finitely generated
integral homology.
Combining Theorem \ref{thm:generic-point},
Corollary \ref{cor:DH-local-manifold-with-boundary},
and Theorem \ref{thm:Jordan},
we conclude that there is a constant $C$ such that, for every prime $p$, any
finite $p$-group $P$ acting effectively and with the weak fixed point property on $X$
has an abelian subgroup $P'\leq P$ satisfying $[P:P']\leq C$.
Applying Theorem \ref{thm:CPS-abstract} we conclude the existence of a constant $C'$ such that
any finite group $G$ acting effectively and with the weak fixed point property on $X$ has
an abelian subgroup $A\leq G$ satisfying $[G:A]\leq C'$.
By Theorem \ref{thm:fixed-point-abelian-group-action-weak-fixed-point}
there is a constant $C''$ depending only on $X$
and a subgroup $B\leq A$ such that $X^B\neq\emptyset$
and $[A:B]\leq C''$. Since $[G:B]\leq C'C''$ the proof of the theorem is complete.

\end{document}